\tikzset{double line with arrow/.style args={#1,#2}{decorate,decoration={markings,%
mark=at position 0 with {\coordinate (ta-base-1) at (0,-2pt);
\coordinate (ta-base-2) at (0,2pt);},
mark=at position 1 with {\draw[#1] (ta-base-1) -- (0,-2pt);
\draw[#2] (ta-base-2) -- (0,2pt);
}}}}
\newcommand{\gax}{\Gamma(G,X)}
\newcommand{\myllangle}{\langle\!\langle}
\newcommand{\myrrangle}{\rangle\!\rangle}
\DeclarePairedDelimiter\abs{\lvert}{\rvert}        
\DeclarePairedDelimiter\norm{\lVert}{\rVert}        
\DeclarePairedDelimiter\angles{\langle}{\rangle}    
\DeclarePairedDelimiter\aangles{\myllangle}{\myrrangle}
\DeclarePairedDelimiter\braces{\{}{\}}            
    \newcommand{\bigbraces}[1]{\braces[\big]{#1}}
\newcommand{\bigmid}{\mathrel{\big|}}            
\DeclareMathOperator{\id}{id}                
\DeclareMathOperator{\aut}{Aut}                
\DeclareMathOperator{\out}{Out}                
\DeclareMathOperator{\Hom}{Hom}
\DeclareMathOperator{\Out}{Out}
\newcommand{\cent}{C}
\DeclareMathOperator{\Gl}{GL}            
\theoremstyle{plain}
\newtheorem{thm}{Theorem}[section]        
\newtheorem{prop}[thm]{Proposition}        \newtheorem{proposition}[thm]{Proposition}
\newtheorem{lem}[thm]{Lemma}            \newtheorem{lemma}[thm]{Lemma}
        \newtheorem{corollary}[thm]{Corollary}
        \newtheorem{problem}[thm]{Problem}
\newtheorem*{thm*}{Theorem}            \newtheorem*{theorem*}{Theorem}
\newtheorem*{prop*}{Proposition}        \newtheorem*{proposition*}{Proposition}
\newtheorem*{lem*}{Lemma}            \newtheorem*{lemma*}{Lemma}
\newtheorem*{cor*}{Corollary}            \newtheorem*{corollary*}{Corollary}
\newtheorem*{qu*}{Question}            \newtheorem*{question*}{Question}
\newtheorem*{conj*}{Conjecture}            \newtheorem*{conjecture*}{Question}
\newtheorem*{prob*}{Problem}        \newtheorem*{problem*}{Problem}
\newtheorem*{fact*}{Fact}
\newtheorem*{claim*}{Claim}
\newtheorem*{case*}{Case}
\numberwithin{equation}{section}
\newtheorem{alphthm}{Theorem}            
\newtheorem{alphcor}[alphthm]{Corollary}           
\newtheorem{alphproblem}[alphthm]{Problem}
\theoremstyle{definition}
        \newtheorem{definition}[thm]{Definition}
\newtheorem*{de*}{Definition}            \newtheorem{definition*}{Definition}
\newtheorem*{notation*}{Notation}
\newtheorem*{conv*}{Convention}            \newtheorem*{convention*}{Convention}
\theoremstyle{remark}
            \newtheorem{remark}[thm]{Remark}
            \newtheorem{example}[thm]{Example}
\newtheorem*{rmk*}{Remark}        \newtheorem*{remark*}{Remark}
\DeclareMathAlphabet{\mathbit}{OT1}{cmr}{bx}{it}
\crefname{thm}{Theorem}{Theorems}              \crefname{theorem}{Theorem}{Theorems}
\crefname{prop}{Proposition}{Propositions}     \crefname{proposition}{Proposition}{Propositions}
\crefname{lem}{Lemma}{Lemmas}                  \crefname{lemma}{Lemma}{Lemmas}
\crefname{rmk}{Remark}{Remarks}                \crefname{remark}{Remark}{Remarks}
\crefname{cor}{Corollary}{Corollaries}         \crefname{corollary}{Corollary}{Corollaries}
\crefname{qu}{Question}{Questions}             \crefname{question}{Question}{Questions}
\crefname{conj}{Conjecture}{Conjectures}       \crefname{conjecture}{Conjecture}{Conjectures}
\crefname{prob}{Problem}{Problems}             \crefname{problem}{Problem}{Problems}
\crefname{fact}{Fact}{Facts}
\crefname{claim}{Claim}{Claims}
\crefname{case}{Case}{Cases}
\crefname{alphthm}{Theorem}{Theorems}          \crefname{alphcor}{Corollary}{Corollaries}
\crefname{alphprop}{Proposition}{Propositions}
\crefname{alphprolem}{Problem}{Problems}
\newcommand{\Cat}[1]{\ifmmode \text{\normalfont \textbf{#1}} \else {\normalfont \textbf{#1}}\fi}
\newcommand{\mhyphen}{\textnormal{-}}
\newcommand{\variable}{\,\mhyphen\,}
\newcommand\MakeComments[3]{
  \newcounter{#2comment}\addtocounter{#2comment}{1}%
  \newcommand{#1}[1]{%
     \textbf{\color{#3}(\StrChar{#2}{1}\arabic{#2comment})}%
     \marginpar{\tiny\raggedright\textbf{\color{#3}(\StrChar{#2}{1}\arabic{#2comment}) #2:} ##1}%
    \addtocounter{#2comment}{1}%
  }%
}
\definecolor{newpurple}{rgb}{0.8, 0, 0.9}
\mathchardef { __egreg_#1: } = \mathcode`#1 \scan_stop:
\NewDocumentCommand{\crse}{m}
 {\group_begin:
  \egreg_embolden:
  #1
  \group_end:}
\newcommand{\cid}{\mathbf{id}}
\DeclareMathOperator{\cAut}{Aut_{\mathbf{Crs}}}
\newcommand{\cmp}{\circ}
\newcommand*{\op}[1]{#1^{\scriptscriptstyle\rm T}}
\def\varcrs{\@ifnextchar[{\@withvarcrs}{\@withoutvarcrs}}
\def\@withvarcrs[#1]#2{\mathcal{E}_{\rm #2}^{\rm #1}}
\def\@withoutvarcrs#1{\mathcal{E}_{\rm #1}}
\def\varFcrs{\@ifnextchar[{\@withvarFcrs}{\@withoutvarFcrs}}
\def\@withvarFcrs[#1]#2{\mathcal{F}_{\rm #2}^{\rm #1}}
\def\@withoutvarFcrs#1{\mathcal{F}_{\rm #1}}
\MakeComments{\asnote}{Ashot}{Purple}
\MakeComments{\alenote}{Alex}{Orange}
\MakeComments{\fnote}{Fede}{ForestGreen}
\DeclareMathOperator{\inn}{Inn}
\DeclareMathOperator{\lox}{\mathcal{L}}
\DeclareMathOperator{\qout}{QOut}
\DeclareMathOperator{\qi}{QI}
\newcommand{\h}{\hookrightarrow_h}
\newcommand{\dbw}{d_{\rm bw}}
\begin{document}

\title[On automorphisms, quasimorphisms and coarse automorphisms]{On automorphisms, quasimorphisms, and coarse automorphisms of acylindrically hyperbolic groups}


\begin{abstract}
    We investigate the action of the automorphism group of an acylindrically hyperbolic group $G$ on its space of homogeneous quasimorphisms, and identify its kernel with the subgroup of ``strongly commensurating'' automorphisms.
    We deduce that if $G$ has no non-trivial finite normal subgroups then it has sufficiently many quasimorphisms to recognize whether an automorphism is inner. As consequences, we show that $\Out(G)$ acts faithfully on the kernel of the comparison map in bounded cohomology and it embeds in (several) groups of coarse automorphisms.
\end{abstract}


\author{Ashot Minasyan}
\address[A. Minasyan]{CGTA, School of Mathematical Sciences, University of Southampton, Highfield, Southampton, SO17~1BJ, United Kingdom}
\email{aminasyan@gmail.com}

\author{Alessandro Sisto}
 \address[A. Sisto]{Department of Mathematics, Heriot-Watt University and Maxwell Institute for Mathematical Sciences, Edinburgh, UK}
    \email{a.sisto@hw.ac.uk}
    
\author{Federico Vigolo}
\address[F. Vigolo]{Mathematisches Institut, Georg-August-Universität Göttingen, Bunsenstr. 3-5, 37073 Göttingen, Germany.}
\email{federico.vigolo@uni-goettingen.de}

\keywords{Quasimorphisms; Outer Automorphisms; Coarse Automorphism; Acylindrical Hyperbolicity; (Strongly) Commensurable Elements}
\subjclass{20F65; 51F30; 20E36; 20F67}

\maketitle

\section{Introduction}
The goal of this note is to study the natural action of the group of outer automorphisms $\out(G)$, of an acylindrically hyperbolic group $G$, on the space of quasimorphisms of $G$, and to leverage this to find natural embeddings of $\out(G)$ into certain groups of ``coarse automorphisms''.

Recall that a \emph{quasimorphism} of $G$ is a map $q\colon G\to\mathbb R$ such that the \emph{defect}
\[
D(q)\coloneqq \sup_{g,h\in G}\abs{q(gh)-q(g)-q(h)}
\]
is finite.
A quasimorphisms $q$ is \emph{homogeneous} if $q(g^k)=kq(g)$, for all $g\in G$ and $k\in \mathbb Z$. We denote
\[
Q(G)\coloneqq\{q\colon G\to \mathbb R \mid q \text{ is a quasimorphism}\} ~\text{ and }
\]
\[
Q_h(G)\coloneqq\{q\in Q(G) \mid q \text{ is homogeneous}\}.
\]

Two functions $f,f'\colon X\to(Y,d)$ taking values in a metric space are \emph{close} if the distance $d(f(x),f'(x))$ remains uniformly bounded as $x\in X$ varies.
It is well known that every quasimorphism $q$ is close to a unique homogeneous quasimorphism $\tilde q$ (see \cref{lem:qm_close_to_hqm}).
In particular, the study of $Q_h(G)$ is equivalent to the study of the space of quasimorphisms considered up to closeness.

There are several reasons to be interested in the set of quasimorphisms of a given group $G$. For instance, they are important in the study of stable commutator length, bounded cohomology, and in showing the existence of unbounded bi-invariant metrics on $G$ \cites{calegari2009scl,frigerio2017bounded,coarse_groups}.
Of course, $\aut(G)$ acts on $Q_h(G)$ by precomposition, and it is natural to study the properties of this action.
Interesting results in this direction are proven in \cite{fournier2023Aut}, where many examples of groups are given where $\aut(G)\curvearrowright Q_h(G)$ has global fixed points (see \cites{brandenbursky2019aut,karlhofer2021autinvariant} for earlier results).

The main objective of this note is orthogonal to that of \cite{fournier2023Aut}. Namely, we investigate the following:

\begin{alphproblem}\label{prob:intro}
    For a given group $G$, what is the kernel of the action of $\aut(G)$ on $Q_h(G)$?
\end{alphproblem}

Our main contribution is a complete solution to \cref{prob:intro} in the class of acylindrically hyperbolic groups. Namely, we say that an automorphism $\phi:G \to G$ is \emph{strongly commensurating} if for every $g \in G$ there exist $m \in \mathbb{N}$ and $h \in G$ such that $\phi(g^m)=h g^m h^{-1}$ in $G$. We will denote by $\aut_{sc}(G)$ the set of all strongly commensurating automorphisms of $G$.
It is not hard to see that $\aut_{sc}(G)$ is a normal subgroup of $\aut(G)$ that is always contained in the kernel of the action of $\aut(G)$ on $Q_h(G)$ (see \cref{lem:sc_auts_form_a_normal_sbgps} below).
In this note we prove that for acylindrically hyperbolic groups the other containment also holds.

\begin{alphthm}\label{thm:intro:main_thm}
    Let $G$ be an acylindrically hyperbolic group, and let  $\phi\in\aut(G)$. Then the following are equivalent:
    \begin{itemize}
        \item[(i)] $\phi\notin\aut_{sc}(G)$,
        \item[(ii)] there exists $q\in Q_h(G)$ such that $q\neq q\circ \phi$,
        \item[(iii)] there exist $q\in Q_h(G)$ and $g\in G$ such that $q|_{\langle g \rangle}$ is unbounded, $(q \circ \phi)|_{\langle g \rangle}$ is zero and the difference $(q- q\circ \phi)\colon G\to \mathbb R$ is not a homomorphism.
    \end{itemize}
    In particular, $\aut_{sc}(G)$ is the kernel of the action $\aut(G)\curvearrowright Q_h(G)$.
\end{alphthm}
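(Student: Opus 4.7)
The implications (iii)$\Rightarrow$(ii) and (ii)$\Rightarrow$(i) are the easy directions. The first is immediate, since a $q$ satisfying~(iii) differs from $q\circ\phi$ on $\langle g\rangle$. For (ii)$\Rightarrow$(i), one takes the contrapositive: if $\phi\in\aut_{sc}(G)$, $q\in Q_h(G)$ and $\phi(g^m)=hg^mh^{-1}$, then homogeneity and conjugation-invariance of homogeneous quasimorphisms yield
\[
q(\phi(g))=\tfrac{1}{m}q(\phi(g^m))=\tfrac{1}{m}q(hg^mh^{-1})=\tfrac{1}{m}q(g^m)=q(g).
\]
This is precisely the normality lemma cited in the excerpt.

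The substance is (i)$\Rightarrow$(iii). Given $\phi\notin\aut_{sc}(G)$, fix $g_0\in G$ such that $\phi(g_0^m)$ is not conjugate to $g_0^m$ for any $m\geq 1$. My plan is to first upgrade $g_0$ to a ``nice'' loxodromic element $f\in G$, and then construct the required quasimorphism via a Bestvina--Fujiwara-type construction available in acylindrically hyperbolic groups. For the upgrade, fix a hyperbolically embedded virtually free subgroup $F\h G$ (Dahmani--Guirardel--Osin) and replace $g_0$ by $g_0\ell$ for a generically chosen loxodromic $\ell\in F$. A perturbation argument---exploiting the virtual cyclicity of stabilizers of loxodromic axes in acylindrically hyperbolic groups---shows that for generic $\ell$, both $f:=g_0\ell$ and $\phi(f)$ act loxodromically on a common acylindrical hyperbolic $G$-space, and are non-commensurate in the strong sense that no power of $\phi(f)$ is conjugate to any power of $f$ or $f^{-1}$.

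Once $f$ is in hand, the Bestvina--Fujiwara/Hull--Osin construction for acylindrically hyperbolic groups provides an infinite-dimensional space of $q\in Q_h(G)$ realizing any prescribed values on a finite collection of pairwise non-commensurate loxodromic elements. Applied to $\{f,\phi(f)\}$, this yields $q$ with $q(f)=1$ and $q(\phi(f))=0$; by homogeneity, $q|_{\langle f\rangle}$ is unbounded while $(q\circ\phi)|_{\langle f\rangle}\equiv 0$, securing the first two clauses of~(iii). For the third clause, we exploit the abundance of quasimorphisms: the affine subspace of $q$ satisfying the above constraints is infinite-dimensional, so a generic choice yields $q-q\circ\phi$ not a homomorphism; equivalently, if every such $q$ gave $q-q\circ\phi\in\Hom(G,\mathbb{R})$, a linearity/dimension argument should contradict the non-commensurativity of $f$ and $\phi(f)$.

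The main obstacle is the upgrade step. The initial $g_0$ need not be loxodromic, and even when it is, $\phi(g_0)$ may fail to be loxodromic on the chosen acylindrical $G$-space, or it may accidentally satisfy $\phi(g_0^m)\sim g_0^{-m}$. The perturbation from $g_0$ to $g_0\ell$ must simultaneously establish loxodromic behavior for both $f$ and $\phi(f)$, avoid spurious $\pm$-conjugacy relations between their powers, and preserve the failure of strong commensurativity. Carrying this out requires careful genericity arguments within the hyperbolically embedded subgroup $F$.
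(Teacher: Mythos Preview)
Your treatment of (iii)$\Rightarrow$(ii) and (ii)$\Rightarrow$(i) matches the paper. The difficulties are all in (i)$\Rightarrow$(iii), and there your sketch has two genuine gaps.

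\textbf{The upgrade step.} You start from a single $g_0$ witnessing failure of \emph{strong} commensurability and propose to perturb to $f=g_0\ell$ with $\ell$ generic in a hyperbolically embedded virtually free subgroup, hoping that $f$ and $\phi(f)$ become non-commensurable loxodromics. Two problems: (a) $\phi$ need not preserve the chosen acylindrical action, so there is no reason $\phi(f)=\phi(g_0)\phi(\ell)$ is loxodromic for the same $X$, and genericity of $\ell$ says nothing about $\phi(\ell)$; (b) you only know $g_0$ and $\phi(g_0)$ are not \emph{strongly} commensurable, and perturbation does not obviously promote this to full non-commensurability. The paper does not try to perturb a given witness. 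Instead it first passes to $G/K(G)$ (which you omit entirely), where $\aut_{sc}=\inn$ by \cite{antolin2016commensurating}, and then invokes the Antol\'{i}n--Minasyan--Sisto theorem to get a loxodromic $h$ with $h$ and $\phi(h)$ non-commensurable. To force $\phi(g)$ to be loxodromic \emph{and special}, the paper uses Dehn filling to produce a free normal subgroup $\aangles{h^n}$ consisting entirely of loxodromics, finds a special element $s$ inside $\aangles{h^n}\cap\aangles{\phi(h^n)}$, and sets $g=\phi^{-1}(s^m)h^n$; then $\phi(g)=s^m\phi(h^n)$ is special by the ``product with a special element'' lemma, and $g$ is special because it lies in $\aangles{h^n}$ and specialness is $\aut$-invariant. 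This is considerably more delicate than a genericity argument in a fixed $F\h G$.

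\textbf{The ``not a homomorphism'' clause.} Your dimension/genericity argument is not a proof: the map $q\mapsto q-q\circ\phi$ is linear, and nothing prevents its image on your affine constraint set from landing in $\Hom(G,\mathbb R)$, which can itself be infinite-dimensional. The paper avoids this by running the construction above \emph{twice}: first to get a special $g_1$, then to get a special $g\in\aangles{g_1}$ with $g,\phi(g),g_1,\phi(g_1)$ pairwise non-commensurable. Hull--Osin then gives $q\in Q_h(G)$ with $q(g_1)=q(\phi(g_1))=q(\phi(g))=0$ and $q(g)=1$. If $r:=q-q\circ\phi$ were a homomorphism, then $r(g_1)=0$ forces $r$ to vanish on $\aangles{g_1}$; but $g\in\aangles{g_1}$ and $r(g)=1$, a contradiction. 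This normal-closure trick is the missing idea.
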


The class of acylindrically hyperbolic groups is very large and contains a number of groups of special interest, see \cites{osin_acylindrically_2016,Osin-survey}. For instance, non-elementary (relatively) hyperbolic groups, most mapping class groups, and $\out(F_n)$, for $n\geq 2$, are all acylindrically hyperbolic.

In condition (iii) of \cref{thm:intro:main_thm}, the fact that $(q- q\circ \phi)$ is not a homomorphism is particularly interesting in view of the connection between quasimorphisms and bounded cohomology. Namely, there is an exact sequence
\[
    \{0\}\longrightarrow
    \Hom(G,\mathbb R)\longrightarrow
    Q_h(G)\longrightarrow
    H^2_{b}(G;\mathbb R)\longrightarrow
    H^2(G;\mathbb R),
\]
where the second map is the natural inclusion and the last one is the \emph{comparison map} (\emph{i.e.}\ the homomorphism induced by embedding the complex of bounded cochains into that of all cochains), see \emph{e.g.}\ \cite{calegari2009scl}*{Theorem 2.50} or \cite{frigerio2017bounded}*{Corollary 2.11}. This sequence is one of the most powerful tools at disposal in the study of $H^2_{b}(G;\mathbb R)$, because it reduces it to understanding the ordinary cohomology group $H^2(G;\mathbb R)$ and the kernel of the comparison map, which is isomorphic to $Q_h(G)/\Hom(G,\mathbb R)$. The following immediate consequence of \cref{thm:intro:main_thm} is therefore of interest.

\begin{alphcor}
    If $G$ is an acylindrically hyperbolic group, then $\aut_{sc}(G)$ is the kernel of the natural action of $\aut(G)$ on $Q_h(G)/\Hom(G,\mathbb R)$.
\end{alphcor}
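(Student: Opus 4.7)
The plan is to deduce this corollary directly from \cref{thm:intro:main_thm}. There is nothing extra to prove beyond unpacking the equivalence of (i) and (iii). The action of $\aut(G)$ on $Q_h(G)$ descends to an action on the quotient $Q_h(G)/\Hom(G,\mathbb{R})$ because, for any $\phi\in\aut(G)$ and any $f\in\Hom(G,\mathbb{R})$, the precomposition $f\circ\phi$ is again a homomorphism. So it remains to compute the kernel of the quotient action.

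First I would handle the easy inclusion. If $\phi\in\aut_{sc}(G)$, then by the main theorem we have $q\circ\phi = q$ for every $q\in Q_h(G)$, hence in particular $q\circ\phi$ and $q$ represent the same coset in $Q_h(G)/\Hom(G,\mathbb{R})$. Thus $\aut_{sc}(G)$ lies in the kernel.

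For the reverse inclusion, suppose $\phi\notin\aut_{sc}(G)$. Applying the implication (i)$\Rightarrow$(iii) of \cref{thm:intro:main_thm}, we obtain a homogeneous quasimorphism $q\in Q_h(G)$ such that the difference $q - q\circ\phi\colon G\to\mathbb{R}$ is \emph{not} a homomorphism. In particular, $q\circ\phi$ and $q$ represent distinct cosets in $Q_h(G)/\Hom(G,\mathbb{R})$, so $\phi$ acts non-trivially on the quotient. This shows the kernel of the quotient action is contained in $\aut_{sc}(G)$, completing the proof.

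There is essentially no obstacle here: condition (iii) of the main theorem is precisely tailored to yield this corollary, since requiring $q-q\circ\phi$ to fail to be a homomorphism is exactly the statement that the classes of $q$ and $q\circ\phi$ differ modulo $\Hom(G,\mathbb{R})$. The entire substance of the corollary is absorbed into the strengthening from (ii) to (iii) in \cref{thm:intro:main_thm}; once that theorem is available, the argument is a two-line verification.
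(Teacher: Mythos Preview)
Your proposal is correct and matches the paper's approach: the paper simply declares this corollary to be ``an immediate consequence of \cref{thm:intro:main_thm}'' without further argument, and your two-line verification using the equivalence (i)$\Leftrightarrow$(iii) is exactly the intended reasoning.
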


For every group $G$, the group of inner automorphisms $\inn(G) \subseteq \aut_{sc}(G)$ is contained in the kernel of the action of $\aut(G)$ on $Q_h(G)$. It follows that this action descends to a natural action of the outer automorphism group $\out(G)$ on $Q_h(G)$.
If $G$ is acylindrically hyperbolic, \cref{thm:intro:main_thm} shows that the kernel of this action of $\out(G)$ is $\aut_{sc}(G)/\inn(G)$. Moreover,  this kernel is necessarily finite when $G$ is finitely generated, see  \cite{antolin2016commensurating}*{Remark~ 7.6}.

\begin{remark}
    If $G$ contains non-trivial finite normal subgroups the inclusion of  $\inn(G)$ in $\aut_{sc}(G)$ may be strict (\emph{e.g.},\ $F_2 \times \mathbb{Z}/2\mathbb{Z}$ has strongly commensurating automorphisms that are not inner).
\end{remark}

It is known that any acylindrically hyperbolic group $G$ contains a unique maximal finite normal subgroup $K(G)$ \cite{dahmani2017hyperbolically}*{Theorem 2.24}, and moreover when $K(G)$ is trivial we have $\aut_{sc}(G)=\inn(G)$ \cite{antolin2016commensurating}*{Theorem 7.5}.
We thus obtain the following.

\begin{alphcor}\label{cor:faithful and fixed points}
    Let $G$ be acylindrically hyperbolic with no non-trivial finite normal subgroups. Then the canonical action of $\Out(G)$ on $Q_h(G)/\Hom(G,\mathbb R)$ is faithful. A fortiori, the action of $\out(G)$ on $Q_h(G)$ is faithful as well.
\end{alphcor}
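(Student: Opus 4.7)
The plan is to combine the preceding corollary (the one stating that $\aut_{sc}(G)$ is the kernel of the action of $\aut(G)$ on $Q_h(G)/\Hom(G,\mathbb R)$) with the cited result that $\aut_{sc}(G)=\inn(G)$ whenever the maximal finite normal subgroup $K(G)$ is trivial. The argument is essentially a two-step bookkeeping calculation, once these inputs are in place.

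First, I would apply the preceding corollary, a direct consequence of \cref{thm:intro:main_thm}, to conclude that for any acylindrically hyperbolic $G$ the kernel of $\aut(G)\curvearrowright Q_h(G)/\Hom(G,\mathbb R)$ coincides with $\aut_{sc}(G)$. Second, under the hypothesis that $K(G)=\{1\}$, \cite{antolin2016commensurating}*{Theorem 7.5} gives $\aut_{sc}(G)=\inn(G)$. Combining these two identifications yields that the kernel of the $\aut(G)$-action on $Q_h(G)/\Hom(G,\mathbb R)$ is precisely $\inn(G)$. Passing to the quotient $\out(G)=\aut(G)/\inn(G)$, the induced action on $Q_h(G)/\Hom(G,\mathbb R)$ is therefore faithful.

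For the "a fortiori" statement, I would invoke the general fact that if a group $\Gamma$ acts on a vector space $V$ preserving a subspace $W$, then the kernel of $\Gamma\curvearrowright V$ is contained in the kernel of the induced action $\Gamma\curvearrowright V/W$, since anything fixing every element of $V$ also fixes every coset. Applying this with $\Gamma=\out(G)$, $V=Q_h(G)$ and $W=\Hom(G,\mathbb R)$ (which is $\aut(G)$-invariant since precomposition by an automorphism preserves the property of being a homomorphism), faithfulness on $Q_h(G)/\Hom(G,\mathbb R)$ forces faithfulness on $Q_h(G)$.

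There is no real obstacle here: the substantive work has already been done in \cref{thm:intro:main_thm} and in the cited \cite{antolin2016commensurating}*{Theorem 7.5}. The only minor points to verify are that $\Hom(G,\mathbb R)\subseteq Q_h(G)$ is $\aut(G)$-invariant and that the action of $\aut(G)$ on $Q_h(G)$ does descend to $\out(G)$, both of which are immediate (inner automorphisms act trivially on homogeneous quasimorphisms because conjugation changes a homogeneous quasimorphism by a bounded amount, and homogeneity forces this bounded difference to vanish).
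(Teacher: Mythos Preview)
Your proposal is correct and follows exactly the route indicated in the paper: the corollary is stated immediately after the two inputs you use (the preceding corollary identifying the kernel as $\aut_{sc}(G)$, and \cite{antolin2016commensurating}*{Theorem 7.5} giving $\aut_{sc}(G)=\inn(G)$ when $K(G)=\{1\}$), with no further argument given. Your treatment of the ``a fortiori'' clause is also correct and just spells out what the paper leaves implicit.
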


\smallskip

Following Hartnick--Schweitzer \cite{HS}, there is an interesting way to reformulate the equivalence of (i) and (ii) in \cref{thm:intro:main_thm} in terms of the mapping from $\aut(G)$ to the group of ``quasioutomorphisms'' of $G$.
Specifically, we say that a function $f\colon G\to H$ is an \emph{HS-quasimorphism} if for every quasimorphism $q\in Q(H)$ the composition $q\circ f$ is also a quasimorphism. Two HS-quasimorphisms $f,f'\colon G\to H$ are \emph{equivalent} if the modulus $\abs{q\circ f - q\circ f'}$ is bounded for every $q\in Q(H)$. It is easy to verify that compositions of HS-quasimorphism are HS-quasimorphisms, and that composition preserves the equivalence of HS-quasimorphisms. It follows that the set equivalence classes of invertible HS-quasimorphisms from $G$ to itself is a group under composition, where $f\colon G\to G$ is \emph{invertible} if there is an HS-quasimorphism $g\colon G\to G$ such that both $f\circ g$ and $g\circ f$ are equivalent to the identity map $\id_G$.
This group is called the \emph{quasioutomorphism group} of $G$, denoted
\[
\qout(G)\coloneqq\left\{f\colon G\to G\ \middle|\ \text{$f$ is an invertible HS-quasimorphism}\right\}/\text{equivalence}.
\]
Of course, inclusion induces a natural homomorphism $\aut(G)\to \qout(G)$. 

By definition, an automorphism $\phi$ is in the kernel of $\aut(G)\to \qout(G)$ if and only if $\abs{q - q\circ\phi}$ is bounded for every $q\in Q(G)$. Passing to the homogeneous quasimorphism close to $q$, this is equivalent to saying that $\phi$ fixes every homogeneous quasimorphism $q\in Q_h(G)$ (if $q(g)\neq q\circ \phi(g)$, then $\abs{q(g^n)-(q\circ\phi)(g^n)}$ grows to infinity with $n$). In other words, the equivalence of (i) and (ii) in \cref{thm:intro:main_thm} and the part of \cref{cor:faithful and fixed points} concerning the action on $Q_h(G)$ can be restated as follows.

\begin{alphcor}\label{cor:intro:qout}
    Let $G$ be an acylindrically hyperbolic group then $\aut_{sc}(G)$ is the kernel of the natural homomorphism $\aut(G)\to\qout(G)$.
    Moreover, if $K(G)=\{1\}$, this homomorphism descends to a natural embedding $\out(G)\hookrightarrow\qout(G)$.
\end{alphcor}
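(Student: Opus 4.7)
The plan is to reduce the statement directly to \cref{thm:intro:main_thm} and to the equality $\aut_{sc}(G) = \inn(G)$ (valid when $K(G)=\{1\}$, by \cite{antolin2016commensurating}) through the elementary observation about homogeneous representatives outlined in the paragraph preceding the corollary.

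First I would unpack the definitions: an automorphism $\phi \in \aut(G)$, viewed as an HS\=/quasimorphism $G\to G$, lies in the kernel of $\aut(G)\to\qout(G)$ precisely when $\phi$ is equivalent to $\id_G$, i.e.\ when the map $q - q\circ\phi\colon G\to\mathbb R$ is bounded for every $q\in Q(G)$. The next step is to show that this condition is equivalent to $q = q\circ\phi$ for every $q \in Q_h(G)$. For one direction, note that if $q\in Q_h(G)$ and $g\in G$ satisfy $q(g)\neq q(\phi(g))$, then by homogeneity
\[
|q(g^n) - (q\circ\phi)(g^n)| = n\,|q(g) - q(\phi(g))| \xrightarrow[n\to\infty]{} \infty,
\]
so $q - q\circ\phi$ is unbounded. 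Conversely, every $q\in Q(G)$ is close to its homogenization $\tilde q \in Q_h(G)$ (by \cref{lem:qm_close_to_hqm}); since $\widetilde{q\circ\phi} = \tilde q\circ \phi$, if every element of $Q_h(G)$ is fixed by $\phi$, then $q - q\circ\phi = (q - \tilde q) + (\tilde q\circ\phi - q\circ\phi)$ is bounded.

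Combining this equivalence with the implication (i)$\Leftrightarrow$(ii) of \cref{thm:intro:main_thm}, the kernel of $\aut(G)\to\qout(G)$ is exactly $\aut_{sc}(G)$, proving the first assertion. For the second, when $K(G)=\{1\}$, the cited result \cite{antolin2016commensurating}*{Theorem 7.5} gives $\aut_{sc}(G)=\inn(G)$, so the kernel of $\aut(G)\to\qout(G)$ is precisely $\inn(G)$, and the map descends to an injective homomorphism $\out(G)\hookrightarrow\qout(G)$.

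The only non\=/routine input is \cref{thm:intro:main_thm}; everything else is a short formal manipulation, so I do not expect any genuine obstacle here beyond carefully handling the passage from arbitrary quasimorphisms to their homogeneous representatives.
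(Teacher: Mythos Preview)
Your proposal is correct and follows essentially the same route as the paper: the paragraph preceding \cref{cor:intro:qout} already observes that $\phi$ lies in the kernel of $\aut(G)\to\qout(G)$ iff $|q-q\circ\phi|$ is bounded for every $q\in Q(G)$, which (via homogenization) is equivalent to $\phi$ fixing $Q_h(G)$ pointwise, and then invokes the equivalence (i)$\Leftrightarrow$(ii) of \cref{thm:intro:main_thm} together with $\aut_{sc}(G)=\inn(G)$ when $K(G)=\{1\}$. Your write-up simply makes these steps a bit more explicit.
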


\begin{remark}
    For the free group $F_n$, \cref{cor:intro:qout} was proved in \cite{HS}*{Proposition 5.1}.
\end{remark}

\cref{cor:faithful and fixed points} also has an interesting consequence regarding \emph{bi-invariant metrics} on groups (\emph{i.e.}\ metrics that are invariant under both left and right multiplication). If $G$ is normally generated by a finite set $S$ (that is,  $G=\aangles{S}$), then the word metric associated with the (usually infinite) generating set $\overline{S}\coloneqq \bigcup_{g\in G} gSg^{-1}$ is bi-invariant. Choosing another finite normally generating set is easily seen to yield a bi-Lipschitz equivalent metric. We denote one such bi-invariant word metric by $\dbw$, and observe that $\aut(G)$ acts by Lipschitz transformations on $(G,\dbw)$. In particular, we have a natural homomorphism
$\aut(G)\to \qi(G,\dbw)$, where
\[\qi(G,\dbw)\coloneqq \{f\colon (G,\dbw)\to(G,\dbw)\mid f \text{ is a quasi-isometry}\}/\text{closeness}.
\]
Once again, this homomorphisms quotients through $\out(G)$, because for any given $g \in G$, using the bi-invariance of $\dbw$, we have
\begin{equation}\label{eq:conjugation is close}
\dbw(h,ghg^{-1})\leq \dbw(h,gh) + \dbw(gh,ghg^{-1})= 2\,\dbw(1,g)
\end{equation}
is bounded by a constant depending only on $g$, which shows that the conjugation by $g$ is close to the identity map.

Observe that if $q\in Q_h(G)$ is any homogeneous quasimorphism and $g\in G$ is an element with $\abs{g}_{\rm bw}=k$, as witnessed by a decomposition $g = s_1^{h_1}\cdots s_k^{h_k}$, for some $s_i\in S^{\pm 1}$ and $h_i \in G$, then
\[
\abs{q(g)} = \abs{q(s_1^{h_1}\cdots s_k^{h_k})} \leq \sum_{i=1}^k\abs{q(s_i^{h_i})} + (k-1)D(q)\leq k \left(D(q)+\max_{s\in S} \abs{q(s)}\right).
\]
Therefore, for any $g_1,g_2 \in G$, with $\dbw(g_1,g_2) \ge 1$, and $C \coloneqq D(q)+\max_{s\in S} \abs{q(s)}$, we have
\[
|q(g_1)-q(g_2)| \le |q(g_1^{-1}g_2)|+D(q) \le (C+1)\dbw(g_1,g_2).
\]
Since $\dbw(g_1,g_2) \in \mathbb{N} \cup \{0\}$, this shows that $q$ is Lipschitz as a function $q\colon (G,\dbw)\to (\mathbb R,\abs{\variable})$. \cref{cor:faithful and fixed points} then has the following consequence.

\begin{alphcor}\label{cor:intro:qi}
    Let $G$ be a finitely normally generated  acylindrically hyperbolic group with $K(G)=\{1\}$. Then the homomorphism $\out(G)\to \qi(G,\dbw)$ is injective.
\end{alphcor}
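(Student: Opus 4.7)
The plan is to deduce injectivity directly from \cref{cor:faithful and fixed points}, which asserts that the action of $\out(G)$ on $Q_h(G)$ is already faithful. Concretely, I would show that if $[\phi] \in \out(G)$ lies in the kernel of $\out(G) \to \qi(G,\dbw)$, then $\phi$ acts trivially on $Q_h(G)$, so by that corollary $[\phi]$ is trivial in $\out(G)$.

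To this end, pick a representative $\phi \in \aut(G)$ such that $\phi\colon (G,\dbw) \to (G,\dbw)$ is close to $\id_G$; that is, there exists $M < \infty$ with $\dbw(\phi(g), g) \leq M$ for all $g \in G$. The computation carried out in the paragraphs preceding the statement shows that every $q \in Q_h(G)$ is Lipschitz as a map $(G,\dbw)\to (\mathbb{R},|\variable|)$, with some Lipschitz constant $L_q$. Combining the two observations, for every $g\in G$ we obtain $|q(\phi(g)) - q(g)| \leq L_q M$, so that $q\circ \phi$ and $q$ are close as real-valued functions on $G$.

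The last ingredient is the standard fact that two close homogeneous quasimorphisms must coincide: if $|q_1(g) - q_2(g)| \leq C$ for all $g \in G$, then for each $h \in G$ and $n \in \mathbb{N}$, homogeneity gives $n|q_1(h) - q_2(h)| = |q_1(h^n) - q_2(h^n)| \leq C$, forcing $q_1(h) = q_2(h)$. Applied to $q$ and $q\circ \phi$, this yields $q\circ \phi = q$ for every $q\in Q_h(G)$, whence \cref{cor:faithful and fixed points} gives $[\phi] = [\id]$ in $\out(G)$. Since the serious content has been absorbed into \cref{thm:intro:main_thm} and its corollaries, I do not anticipate any substantial obstacle in this argument; it is essentially a bookkeeping reduction that exploits the already-established Lipschitz behaviour of elements of $Q_h(G)$ with respect to $\dbw$.
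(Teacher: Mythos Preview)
Your argument is correct and matches the paper's intended proof: the paper sets up precisely this reduction by showing, in the paragraph immediately preceding the corollary, that every $q\in Q_h(G)$ is Lipschitz for $\dbw$, and then invokes \cref{cor:faithful and fixed points}. The only cosmetic remark is that every representative $\phi$ of an element in the kernel is automatically close to $\id_G$ (since inner automorphisms are already close to $\id_G$ by \eqref{eq:conjugation is close}), so the phrase ``pick a representative such that'' could simply read ``let $\phi$ be any representative; then $\phi$ is close to $\id_G$''.
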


\begin{remark}
    The metric $\dbw$ is a very natural metric to consider in the study of bi-invariant metrics, because it is `coarsely maximal' in the sense that if $d$ is any bi-invariant metric on $G$ then there is some constant $C >0$ such that $d\leq C\dbw$ (this is seen by letting $C\coloneqq \max_{s\in S}d(1,s)$ and applying bi-invariance and the triangle inequality on $d$).
    In particular, if $\dbw$ is bounded on $G$, then $G$ does not admit \emph{any} unbounded bi-invariant metric.

    It is in general very hard to understand whether $\dbw$ is unbounded for a given group $G$, and one of the most flexible methods for proving it is by showing that $Q_h(G)\neq \{0\}$ (see \cite{coarse_groups}*{Section 8.3} and references therein for an entry point to this subject). \cref{cor:intro:qi} can be seen as saying that for  acylindrically hyperbolic groups $\dbw$ is not only unbounded, but also fine enough to recognise innerness.
\end{remark}

There is a further point of view that unifies both \cref{cor:intro:qout} and \cref{cor:intro:qi}, namely that of coarse automorphisms groups. Postponing the definitions to \cref{sec:crse aut}, we record here the following.

\begin{alphcor}\label{cor:intro:cAut}
    Let $G$ be an acylindrically hyperbolic group with $K(G)=\{1\}$. If $\mathcal E$ is a connected, $\aut(G)$-invariant and equi bi-invariant coarse structure on $G$, with $\mathcal E\subseteq \mathcal E_{G\to\mathbb R}$, then the natural homomorphism $\out(G)\to \cAut(G,\mathcal E)$ is injective.
\end{alphcor}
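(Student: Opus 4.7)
The plan is to reduce this to \cref{thm:intro:main_thm} in exactly the same way that the statements about $\qout(G)$ and $\qi(G,\dbw)$ were deduced, only now encoded in the language of coarse structures. Suppose $\phi\in\aut(G)$ represents the trivial element in $\cAut(G,\mathcal E)$. The $\aut(G)$-invariance and equi bi-invariance of $\mathcal E$ should guarantee, as in equation \eqref{eq:conjugation is close} and the discussion preceding \cref{cor:intro:qi}, that the homomorphism $\aut(G)\to\cAut(G,\mathcal E)$ is well-defined and factors through $\out(G)$; so the goal is exactly to show $\phi\in\inn(G)$.

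The core step is to unpack the inclusion $\mathcal E\subseteq\mathcal E_{G\to\mathbb R}$. By definition, $\mathcal E_{G\to\mathbb R}$ is the coarse structure induced on $G$ by the family of all quasimorphisms $q\colon G\to\mathbb R$, so the inclusion is precisely saying that every quasimorphism is a controlled map from $(G,\mathcal E)$ to $(\mathbb R,\abs{\variable})$. Since $\phi$ represents the identity in $\cAut(G,\mathcal E)$, the maps $\phi$ and $\id_G$ are coarsely close with respect to $\mathcal E$. Pushing this closeness forward along any $q\in Q(G)$, we deduce that $q\circ\phi$ is close to $q$ as a function $G\to\mathbb R$; in other words, $q-q\circ\phi$ is uniformly bounded for every quasimorphism $q$.

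Now restrict to a homogeneous quasimorphism $q\in Q_h(G)$. For any $g\in G$, homogeneity gives $q(g^n)-q(\phi(g^n))=n\bigpar{q(g)-q(\phi(g))}$ for all $n\in\mathbb Z$, and since the left-hand side is bounded independently of $n$, we must have $q\circ\phi=q$ on all of $G$. This holds for every $q\in Q_h(G)$, so the equivalence $(ii)\Rightarrow(i)$ in \cref{thm:intro:main_thm} yields $\phi\in\aut_{sc}(G)$. Because $K(G)=\{1\}$, the result of \cite{antolin2016commensurating}*{Theorem 7.5} quoted in the discussion after \cref{thm:intro:main_thm} gives $\aut_{sc}(G)=\inn(G)$, so $\phi$ is inner and the induced homomorphism $\out(G)\to\cAut(G,\mathcal E)$ is injective.

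The only real obstacle is bookkeeping with the coarse-structure definitions given in \cref{sec:crse aut}: one must check that ``coarsely close'' in $\mathcal E$, composed with a controlled map $q\colon(G,\mathcal E)\to(\mathbb R,\abs{\variable})$, yields honest closeness of $q$ and $q\circ\phi$ as real-valued functions. Once this translation is in place, the argument is essentially identical to the proofs of \cref{cor:intro:qout,cor:intro:qi}, of which the present corollary is the natural common generalisation.
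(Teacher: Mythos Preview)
Your proposal is correct and follows essentially the same route as the paper: the paper packages your middle step as a homomorphism $\cAut(G,\mathcal E)\to\qout(G)$ via \cref{prop:cAut to QOUT} and then invokes \cref{cor:intro:qout}, whereas you unwind this directly by composing $\mathcal E$-closeness with controlled quasimorphisms and appealing to \cref{thm:intro:main_thm}. One small correction in your bookkeeping: it is \emph{connectedness} (together with equi bi-invariance), not $\aut(G)$-invariance, that makes conjugations $\mathcal E$-close to the identity and hence makes the map factor through $\out(G)$; the $\aut(G)$-invariance is what ensures $\aut(G)\to\cAut(G,\mathcal E)$ is well-defined in the first place.
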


This solves \cite{coarse_groups}*{Problem 12.1.4} for  acylindrically hyperbolic groups without non-trivial finite normal subgroups.

\begin{remark}
    A possibly misleading but certainly very inspiring observation is that \cref{cor:intro:cAut} is already known to be true for $\mathbb Z^n$ with its natural coarse structure. In fact, in this case $\out(\mathbb Z^n)\cong\Gl(n,\mathbb Z)$, $\cAut(\mathbb Z^n,\varcrs{\norm\variable})\cong \Gl(n,\mathbb R)$, and the injection $\out(\mathbb Z^n)\hookrightarrow \cAut(\mathbb Z^n,\varcrs{\norm\variable})$ is the natural inclusion of $\Gl(n,\mathbb Z)$ into $\Gl(n,\mathbb R)$ (\cite{coarse_groups}*{Corollary 1.3.7}).

    The ``misleading'' part of the above observation is that for non-abelian $G$ the groups of coarse automorphisms seem to be very mysterious objects, much unlike $\Gl(n,\mathbb R)$.
\end{remark}

\begin{remark}
    One curious consequence of \cite{chifan2024small} (see also \cite{ollivier2007kazhdan}) and \cref{cor:intro:cAut} is that every countable group appears as a subgroup of  $\cAut(G,\mathcal E)$ for some finitely generated acylindrically hyperbolic $G$ with Kazhdan's property (T).
\end{remark}

\subsection*{Idea of the proof and structure of the paper}
We now discuss the proof of \cref{thm:intro:main_thm} in the case when $G$ contains no non-trivial finite normal subgroups (so that  $\aut_{sc}(G)=\inn(G)$). This case is considered in \cref{thm:main_with_trivial_radical}, which we will show later to imply \cref{thm:intro:main_thm}.  The starting point is a theorem of the first two authors and Antol\'{i}n, which shows that $\phi \in \aut(G)$ is inner if and only if $g$ and $\phi(g)$ are commensurable for all loxodromic $g\in G$ \cite{antolin2016commensurating}.
Using  a variety of tools from the theory of acylindrically hyperbolic groups, including the Algebraic Dehn Filling Theorem of Dahmani--Guirardel--Osin \cite{dahmani2017hyperbolically}, we improve this statement by showing that for each $\phi \in \aut(G)\smallsetminus \inn(G)$ there is a ``special'' loxodromic element $g \in G$ and such that $\phi(g)$ is also special loxodromic for a given acylindrical action on a hyperbolic space (this results may be of independent interest, see \cref{prop:special non commensurable}).
Once this is done, the equivalence of (i) and (ii) in \cref{thm:intro:main_thm} is shown directly by using the quasimorphism extension theorem of Hull--Osin \cite{hull2013induced}. Proving that these conditions imply (iii) as well requires one extra trick to make sure that the difference $q-q\circ\phi$ cannot be a homomorphism.

This outline of proof is reflected in the structure of this note: in \cref{sec:prelims} we recover all the relevant results about acylindrically hyperbolic groups and black-box them in self-contained statements. In \cref{sec:proof} we use them to prove \cref{thm:intro:main_thm}.

In \cref{sec:crse aut}, we recall the notation from \cite{coarse_groups} to make sense of \cref{cor:intro:cAut} and show how it follows from the main theorem. In the final section we discuss some questions that arise quite naturally in this setup and we include a result telling apart two coarsifications of the rank $n$ free group $F_n$ (\cref{thm:strict containment}). This last fact provides an answer to a special case of a question in \cite{coarse_groups}*{Remark 12.3.13}.

\subsection*{Acknowledgments}
The authors would like to thank Francesco Fournier-Facio for his helpful comments and for pointing out that commutator length does not bound its stable counterpart (\cref{thm:strict containment}), and Jarek K\k{e}dra for providing the last ingredient needed for the proof of \cref{thm:strict containment} (\cref{lem:commutator vs cancellation length}).

The authors also thank the Isaac Newton Institute for Mathematical Sciences, Cambridge, for support and hospitality during the programme ``Operators, Graphs, Groups'' where work on this paper was undertaken. This work was supported by EPSRC grant no EP/Z000580/1.


\section{Preliminaries}\label{sec:prelims}
\subsection{Quasimorphisms}
Let $G$ be a group. 
Recall that the quasimorphisms $q,q':G \to \mathbb{R}$ are said to be \emph{close} if 
$\sup_{g \in G}\abs{q'(g)-q(g)}<\infty$. It is well-known that any quasimorphism is close to a homogeneous one.

\begin{lemma}\label{lem:qm_close_to_hqm} 
Every quasimorphism on a group $G$ is close to a unique homogeneous quasimorphism. More precisely, if $q \in Q(G)$ then the function $\overline{q}:G \to \mathbb{R}$, defined by
\[\overline{q}(g)=\lim_{n \to \infty}\frac{q(g^n)}{n},~\text{ for all } g \in G,\]
is a homogeneous quasimorphism close to $q$. 
Moreover, if $r,r' \in Q_h(G)$ are close to each other then $r=r'$.
\end{lemma}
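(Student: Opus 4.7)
The plan is to go through the three assertions of the lemma in sequence. First, I would show that for every $g \in G$ the limit defining $\overline{q}(g)$ exists. The key observation is that the defect inequality $\abs{q(g^{n+m})-q(g^n)-q(g^m)}\le D(q)$ implies, by a straightforward induction on $n$, the stronger estimate $\abs{q(g^{nm})-n\,q(g^m)}\le (n-1)D(q)$. Dividing by $nm$ yields $\bigabs{\tfrac{q(g^{nm})}{nm}-\tfrac{q(g^m)}{m}}\le \tfrac{D(q)}{m}$, and by symmetry also $\bigabs{\tfrac{q(g^{nm})}{nm}-\tfrac{q(g^n)}{n}}\le \tfrac{D(q)}{n}$. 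Combining these through the common index $nm$ shows that $\{q(g^n)/n\}_{n\in\mathbb N}$ is a Cauchy sequence in $\mathbb R$, hence convergent, so $\overline{q}$ is well-defined.

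Next, I would establish that $\overline{q}$ is close to $q$ and deduce that it is itself a quasimorphism. Taking $m=1$ in the estimate above gives $\abs{q(g^n)-n\,q(g)}\le (n-1)D(q)$ for all $n$, whence $\abs{\tfrac{q(g^n)}{n}-q(g)}\le D(q)$. Passing to the limit yields $\abs{\overline{q}(g)-q(g)}\le D(q)$, so $\overline{q}$ is at bounded distance from $q$. Since the property of having finite defect is preserved under bounded perturbations (by a three-point application of the triangle inequality one gets $D(\overline{q})\le D(q)+3D(q)$), this forces $\overline{q}\in Q(G)$.

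To check homogeneity, note that for $k>0$ the identity $\overline{q}(g^k)=\lim_{n}q(g^{kn})/n = k\lim_{n}q(g^{kn})/(kn)=k\,\overline{q}(g)$ is immediate. For $k=0$, the inequality $\abs{q(1)-2q(1)}\le D(q)$ gives $\abs{q(1)}\le D(q)$, so $\overline{q}(1)=\lim q(1)/n=0$. For negative $k$, apply the defect to $g^n\cdot g^{-n}=1$ to get $\abs{q(g^n)+q(g^{-n})}\le 2D(q)$, and dividing by $n$ yields $\overline{q}(g^{-1})=-\overline{q}(g)$, handling all integers $k$.

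Finally, uniqueness follows at once from homogeneity: if $r,r'\in Q_h(G)$ satisfy $\sup_{g\in G}\abs{r(g)-r'(g)}\le M<\infty$, then for every $g\in G$ and every $n\in\mathbb N$ we have $n\abs{r(g)-r'(g)}=\abs{r(g^n)-r'(g^n)}\le M$, so that $\abs{r(g)-r'(g)}\le M/n\to 0$, whence $r=r'$. The only mildly subtle point in the whole proof is the Cauchy argument for convergence of $q(g^n)/n$; the rest is bookkeeping once the right inductive estimate on $\abs{q(g^{nm})-n\,q(g^m)}$ is in hand.
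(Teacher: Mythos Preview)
Your argument is correct and complete. The paper, however, does not actually prove the first part of the lemma: it simply cites Calegari's \emph{scl} (Lemma 2.21) and Frigerio's book (Proposition 2.10) for the fact that $\overline{q}$ exists, is a homogeneous quasimorphism, and is close to $q$. The only part the paper proves in detail is the uniqueness claim, and there your argument is identical to theirs (divide $\abs{r(g^n)-r'(g^n)}\le C$ by $n$ and use homogeneity). So your write-up is strictly more self-contained than the paper's; the Cauchy argument via the inductive estimate $\abs{q(g^{nm})-n\,q(g^m)}\le (n-1)D(q)$ is exactly the standard one appearing in those references, so nothing is lost or gained mathematically---you have simply unpacked what the paper black-boxes.
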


\begin{proof} The fact that $\overline{q}$ is a homogeneous quasimorphism close to $q$ is proved in \cite{calegari2009scl}*{Lemma 2.21} or \cite{frigerio2017bounded}*{Proposition 2.10}.

Now suppose that for $r,r' \in Q_h(G)$ there is $C \ge 0$ such that $\abs{r(g)-r'(g)} \le C$ for all $g \in G$. Then for every $h \in G$ and all $n \in \mathbb{N}$ we have
\[|r(h)-r'(h)|=\frac1n |r(h^n)-r'(h^n)| \le \frac{C}n,\]
hence $r(h)=r'(h)$, as claimed.    
\end{proof}

Recall, from the introduction, that $\aut_{sc}(G)$ denotes the set of all strongly commensurating automorphisms of $G$.
\begin{lemma}\label{lem:sc_auts_form_a_normal_sbgps}
    If $G$ is  any group then
\begin{itemize}
    \item[(i)] $\aut_{sc}(G) $ is a normal subgroup of $\aut(G)$, containing $\inn(G)$;
    \item[(ii)] $\aut_{sc}(G) $ acts trivially on $Q_h(G)$, that is, for all $q \in Q_h(G)$ and all $\phi \in \aut_{sc}(G)$ we have $q \circ \phi=q$.
\end{itemize}
\end{lemma}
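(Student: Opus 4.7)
The plan is to prove (i) by direct verification of the subgroup axioms applied to $\aut_{sc}(G)$, and then derive (ii) from the conjugation-invariance of homogeneous quasimorphisms together with the definition of strongly commensurating.

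For (i), the containment $\inn(G) \subseteq \aut_{sc}(G)$ is immediate by taking $m = 1$ in the definition. To verify closure under composition, given $\phi, \psi \in \aut_{sc}(G)$ and $g \in G$, I would first apply the defining property of $\psi$ to obtain $m_1 \in \mathbb{N}$ and $h_1 \in G$ with $\psi(g^{m_1}) = h_1 g^{m_1} h_1^{-1}$, and then apply the property of $\phi$ to the element $g^{m_1}$ to obtain $m_2 \in \mathbb{N}$ and $h_2 \in G$ with $\phi(g^{m_1 m_2}) = \phi((g^{m_1})^{m_2}) = h_2 g^{m_1 m_2} h_2^{-1}$. A short computation combining these, using that $\phi$ is a homomorphism, gives
\[
(\phi \circ \psi)(g^{m_1 m_2}) \;=\; \phi(h_1)\, h_2\, g^{m_1 m_2}\, h_2^{-1}\, \phi(h_1)^{-1},
\]
exhibiting $\phi \circ \psi \in \aut_{sc}(G)$. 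For closure under inverses, given $\phi \in \aut_{sc}(G)$ and $g \in G$, I would apply the defining property of $\phi$ to $\phi^{-1}(g)$ to produce $m, h$ with $\phi(\phi^{-1}(g)^m) = h \phi^{-1}(g)^m h^{-1}$, and then rearrange to get $\phi^{-1}(g^m) = h^{-1} g^m h$. Normality follows from an analogous rearrangement applied to $\psi^{-1}(g)$ for arbitrary $\psi \in \aut(G)$.

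For (ii), the key input is the standard fact that every homogeneous quasimorphism $q$ is conjugation-invariant, that is, $q(aba^{-1}) = q(b)$ for all $a, b \in G$. I would derive this from the quasimorphism inequality
\[
\abs{q(a b^n a^{-1}) - q(a) - q(b^n) - q(a^{-1})} \le 2 D(q),
\]
the homogeneity identities $q(a^{-1}) = -q(a)$ and $q((aba^{-1})^n) = n\, q(aba^{-1}) = q(ab^n a^{-1})$, and dividing by $n$ as $n \to \infty$. Given this, for $\phi \in \aut_{sc}(G)$, $q \in Q_h(G)$, and $g \in G$, I pick $m \in \mathbb{N}$ and $h \in G$ with $\phi(g^m) = h g^m h^{-1}$ and compute
\[
m\, q(\phi(g)) \;=\; q(\phi(g)^m) \;=\; q(\phi(g^m)) \;=\; q(h g^m h^{-1}) \;=\; q(g^m) \;=\; m\, q(g),
\]
whence $q \circ \phi = q$.

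I do not expect any serious obstacle: the whole statement is a formal consequence of the definitions plus homogeneity, and the only bookkeeping to be careful about is tracking how the exponent $m$ in the defining condition must be multiplied when composing two strongly commensurating automorphisms.
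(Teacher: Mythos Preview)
Your proof is correct and matches the paper's approach: the paper leaves (i) as a straightforward verification and proves (ii) by the same limiting argument, deriving $\abs{q(g)-q(hgh^{-1})}\le 2D(q)$ and then using homogeneity to pass to the limit. The only cosmetic difference is that you first isolate exact conjugation invariance of $q\in Q_h(G)$ and then apply it, whereas the paper keeps the bound $2D$ and divides by an extra parameter $n$ at the end; the underlying idea is identical.
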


\begin{proof} Verification of claim (i) is straightforward and left to the reader.

For claim (ii), suppose that $q \in Q_h(G)$ has defect $D \ge 0$. One easily checks that for all $x,y \in G$ we have 
$\abs{q(xy)-q(yx)}\le 2D$, which implies that
\begin{equation}\label{eq:quasimorphi_almost_commut}
    \abs{q(g)-q(hgh^{-1})}\le 2D,~\text{ for all }g,h\in G.
\end{equation}

Consider any $\phi \in \aut_{sc}(G)$ and any element $g \in G$. Then there exist $h \in G$ and $m \in \mathbb{N}$ such that $\phi(g^m)=hg^mh^{-1}$. In view of \eqref{eq:quasimorphi_almost_commut}, for each $n \in \mathbb{N}$ we have
\[
\abs{q(g) - q(\phi(g))}
=\frac {1}{mn} \abs{q(g^{mn}) - q(\phi(g)^{mn})}=
\frac {1}{mn} \abs{q(g^{mn}) - q(hg^{mn}h^{-1})} \leq \frac{2D}{mn}.
\]
Since the right-hand side can be made arbitrarily small by choosing large enough $n$, we can conclude that
$q(g)=q(\phi(g))$, for all $g \in G$, as required.   
\end{proof}

\subsection{Acylindrically hyperbolic groups}
We now briefly recall the definition and basic results about acylindrically hyperbolic groups.

Following Osin \cite{osin_acylindrically_2016}, we say that a group $G$ is \emph{acylindrically hyperbolic} if there is a (possibly infinite) generating set $X$ of $G$ such that the Cayley graph $\Gamma(G,X)$ is a hyperbolic metric space, the natural action of $G$ on $\Gamma(G,X)$ is \emph{acylindrical} and the visual boundary $\partial \Gamma(G,X)$ contains more than two points (equivalently, $G$ admits a non-elementary acylindrical action on a hyperbolic metric space, see \cite{osin_acylindrically_2016}*{Theorem~1.2}).

An equivalent way to study acylindrically hyperbolic groups proceeds via {hyperbolically embedded subgroups}, introduced by Dahmani, Guirardel and Osin in \cite{dahmani2017hyperbolically}. Let $\{H_\lambda\}_{\lambda \in \Lambda}$ be a family of subgroups of a group $G$, and let $X \subseteq G$ be a \emph{relative generating set} of $G$ with respect to this family (\emph{i.e.}\ $G$ is generated by the subset $X \bigcup_{\lambda \in \Lambda} H_\lambda$). The family of subgroups $\{H_\lambda\}_{\lambda \in \Lambda}$ is \emph{hyperbolically embedded in $G$ with respect to  $X$}, written  $\{H_\lambda\}_{\lambda \in \Lambda} \h (G,X)$, if the Cayley graph $\Gamma(G, X \sqcup \bigsqcup_{\lambda \in \Lambda} H_\lambda)$ is hyperbolic and for each $\lambda \in \Lambda$ the metric space $(H_\lambda,\widehat{\mathrm{d}}_\lambda)$ is locally finite, where
$\widehat{\mathrm{d}}_\lambda$ denotes the \emph{relative metric}  on $H_\lambda$ (the precise definition for which can be found  in  \cite{osin_acylindrically_2016}*{Subsection~2.4}). We will also write $\{H_\lambda\}_{\lambda \in \Lambda} \h G$ if there is a relative generating set $X$ of $G$ such that $\{H_\lambda\}_{\lambda \in \Lambda} \h (G,X)$.

By \cite{osin_acylindrically_2016}*{Theorem~1.2}, a group is acylindrically hyperbolic if and only if there exists a proper infinite subgroup $H < G$ such that $H \h G$.

For us, the importance of hyperbolically embedded subgroups stems from the fact that they provide a way to construct quasimorphisms. 
The next statement is an easy consequence of results of Hull--Osin \cite{hull2013induced}.

\begin{lemma}\label{lem:extension of qmorph}
    Suppose that $\{H_\lambda\}_{\lambda \in \Lambda} \h G$ and let $q_\lambda\in Q_h(H_\lambda)$, for $\lambda \in \Lambda$. 
    Then there is a homogeneous quasimorphism $q\in Q_h(G)$ such that the restriction $q|_{H_{\lambda}}=q_\lambda$, for every $\lambda \in \Lambda$.
\end{lemma}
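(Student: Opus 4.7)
The strategy is to bootstrap from the extension theorem of Hull--Osin, which produces a (generally non-homogeneous) extension close to the prescribed data, and then to pass to homogenisations using \cref{lem:qm_close_to_hqm}. The uniqueness statement in \cref{lem:qm_close_to_hqm} will force the homogenised extension to restrict \emph{on the nose} to each $q_\lambda$, rather than merely being close to it.

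More precisely, the plan is as follows. First, I would invoke the main extension result of \cite{hull2013induced} to obtain a quasimorphism $q' \colon G \to \mathbb{R}$ with the property that, for each $\lambda \in \Lambda$, the restriction $q'|_{H_\lambda}$ is close to the given $q_\lambda$ (the Hull--Osin machinery is designed precisely to extend quasi-cocycles on hyperbolically embedded subfamilies to the whole group up to bounded error, using projections associated with the hyperbolically embedded structure). Next, I would apply \cref{lem:qm_close_to_hqm} to $q'$ in order to produce a homogeneous quasimorphism $q \coloneq \widetilde{q'} \in Q_h(G)$ close to $q'$.

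It then remains to verify that $q$ satisfies $q|_{H_\lambda} = q_\lambda$ for every $\lambda$. Fix $\lambda \in \Lambda$ and observe that the restriction $q|_{H_\lambda}$ is itself a homogeneous quasimorphism of $H_\lambda$, since $q$ is homogeneous on $G$ and a restriction of a quasimorphism to a subgroup is a quasimorphism. Moreover, $q|_{H_\lambda}$ is close to $q'|_{H_\lambda}$ (because $q$ is close to $q'$ on all of $G$), which in turn is close to $q_\lambda$ by the choice of $q'$. Thus $q|_{H_\lambda}$ and $q_\lambda$ are two homogeneous quasimorphisms of $H_\lambda$ at bounded distance, so the uniqueness clause of \cref{lem:qm_close_to_hqm} forces $q|_{H_\lambda} = q_\lambda$, as desired.

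The only non-routine input is the first step, so the main thing to double-check is that the Hull--Osin extension theorem applies to the family $\{H_\lambda\}_{\lambda\in\Lambda}$ in the generality stated here (in particular, to arbitrary index sets $\Lambda$) and produces a real-valued quasimorphism whose restriction to each $H_\lambda$ is close to $q_\lambda$. Once this is in place, the homogenisation and uniqueness argument is entirely formal.
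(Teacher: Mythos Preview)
Your proposal is correct and follows exactly the paper's approach: invoke the Hull--Osin extension theorem to get a quasimorphism $q'$ whose restriction to each $H_\lambda$ is close to $q_\lambda$, then homogenise via \cref{lem:qm_close_to_hqm} and use uniqueness to conclude $q|_{H_\lambda}=q_\lambda$. The paper cites \cite{hull2013induced}*{Theorem~4.2 and Lemma~4.1} for the first step (noting that quasimorphisms are quasicocycles with trivial coefficients and that the restriction agrees with $q_\lambda$ off a finite set), but otherwise the argument is identical to yours.
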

\begin{proof}
    This is a corollary of \cite{hull2013induced}*{Theorem 4.2}. Specifically, quasimorphisms are exactly quasicocycles with values in $\mathbb R$ seen as a trivial $G$-module, and the homogeneous ones are asymmetric quasicocycles.
    Therefore, \cite{hull2013induced}*{Theorem 4.2} together with \cite{hull2013induced}*{Lemma 4.1} yield a quasimorphism $q'\colon G\to \mathbb R$ such that for each $\lambda \in \Lambda$ the restriction of $q'$ to $H_\lambda$ coincides with $q_\lambda$ except at finitely many points (in particular, $q'|_{H_\lambda}$ is close to $q_{\lambda}$). Letting $q \in Q_h(G)$ be the homogeneous quasimorphism close to $q'$ concludes the proof (see \cref{lem:qm_close_to_hqm}).
\end{proof}

Now, suppose that $G$ is an acylindrically hyperbolic group and $X$ is a generating set of $G$ such that $\gax$ is hyperbolic, $G$ acts on $\gax$ acylindrically and $|\partial \gax|>2$.
We denote the set of elements of $G$ acting \emph{loxodromically} on $\gax$ by $\lox(G,X)$.
Note that since the action of $G$ on $\gax$ is acylindrical, every \emph{loxodromic} element $g\in G$ automatically satisfies the \emph{weak proper discontinuity} (WPD) condition, see \cite{osin_acylindrically_2016}. In particular, the set $\lox(G,X)$ coincides with the set $\lox_{WPD}(G,\gax)$, considered in \cite{antolin2016commensurating}.
For the moment, all that we need to know about $\lox(G,X)$ is that if $g^k\in\lox(G,X)$, for \emph{some} $k\in\mathbb Z$, then $g$ has infinite order and $g^n\in\lox(G,X)$, for \emph{every} $n\in\mathbb Z\smallsetminus\{0\}$.

A group is called \emph{elementary} if it is virtually cyclic. Maximal elementary subgroups of loxodromic elements will play an important role in our arguments.

\begin{lemma}[\cite{dahmani2017hyperbolically}*{Lemma 6.5 and Corollary 6.6}] \label{lem:max_elem_sbgp}
   Given $g\in \lox(G,X)$, there exists a unique maximal elementary subgroup of $G$ containing $g$, denoted $E_G(g)$.
   Moreover,
   \[
   E_G(g)=\{x\in G\mid \exists \, m,n \in \mathbb Z\smallsetminus \{0\}\text{ s.t. }xg^nx^{-1}=g^m\}.
   \]
\end{lemma}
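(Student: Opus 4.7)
The plan is to realise $E_G(g)$ explicitly as the setwise stabiliser of the two limit points of $g$ on the Gromov boundary of $\Gamma(G,X)$. Since $g\in\lox(G,X)$, it fixes exactly two points $g^+,g^-\in\partial\Gamma(G,X)$, and I set
\[
E(g)\coloneqq \bigl\{x\in G \bigm| x\cdot\{g^+,g^-\}=\{g^+,g^-\}\bigr\}.
\]
The three things to verify are: (a) $E(g)$ is virtually cyclic; (b) every virtually cyclic subgroup of $G$ containing $g$ is contained in $E(g)$; (c) $E(g)$ coincides with the right-hand side of the displayed formula. Together (a) and (b) yield existence and uniqueness of a maximal elementary subgroup through $g$, while (c) provides the desired characterisation.

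The main obstacle is (a), where the WPD property built into acylindricity is essential. Fix a quasi-axis $L$ for $g$ with translation length $\tau>0$. For every $x\in E(g)$, the translate $xL$ has the same pair of endpoints as $L$ on the boundary (possibly swapped), hence lies in a uniformly bounded neighbourhood of $L$. A standard $\delta$-hyperbolic argument then produces an integer $k_x\in\mathbb Z$ such that $xg^{k_x}$ coarsely fixes arbitrarily long segments of $L$. Acylindricity bounds the number of elements that can coarsely fix a long segment, so an averaging argument converts this into an upper bound on $[E(g):\langle g\rangle]$, showing that $E(g)$ is virtually cyclic.

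For (b), if $H\leq G$ is virtually cyclic and contains $g$, then $\langle g\rangle$ has finite index in $H$, so for every $h\in H$ the intersection $h\langle g\rangle h^{-1}\cap\langle g\rangle$ is infinite; this forces an equation $hg^nh^{-1}=g^m$ with $n,m\neq 0$, and conjugating the quasi-axis of $g^n$ shows that $h$ preserves $\{g^+,g^-\}$, hence $H\subseteq E(g)$. This simultaneously establishes uniqueness of the maximal elementary subgroup through $g$ and gives one inclusion in (c). The reverse inclusion in (c) follows since, for any $x\in E(g)$, both $\langle g\rangle$ and $x\langle g\rangle x^{-1}$ have finite index in the virtually cyclic group $E(g)$, so their intersection is infinite cyclic, producing the required nonzero integers $m$ and $n$ with $xg^nx^{-1}=g^m$.
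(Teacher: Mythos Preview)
The paper does not supply its own proof of this lemma: it is quoted verbatim from \cite{dahmani2017hyperbolically}*{Lemma~6.5 and Corollary~6.6} and used as a black box. Your sketch is correct and is essentially the argument given in that reference---realise $E_G(g)$ as the setwise stabiliser of $\{g^+,g^-\}$ in $\partial\Gamma(G,X)$, use acylindricity (equivalently, WPD) to bound the number of elements moving a long segment of the quasi-axis a bounded amount and hence show $[E(g):\langle g\rangle]<\infty$, and derive the algebraic description from the fact that any two infinite cyclic subgroups of finite index in a virtually cyclic group intersect nontrivially.
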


For a group $G$, we say that elements $g,h\in G$ are \emph{commensurable} if there is $x \in G$ and $n,m\in \mathbb Z\smallsetminus \{0\}$ such that $g^n = xh^mx^{-1}$. If, additionally, one can ensure that $m=n$ then the elements $g,h$ are called \emph{strongly commensurable}. Both commensurability and strong commensurability define equivalence relations on a group, and an automorphism of $G$ is (strongly) commensurating if and only if it preserves each equivalence class for the corresponding relation setwise.

\begin{lemma}[\cite{antolin2016commensurating}*{Corollary 3.11}]
\label{lem:E(g)_is_hyp_emb} Suppose that $a_1,\dots,a_k \in \lox(G,X)$ is a collection of pairwise non-commensurable loxodromic elements in $G$. Then \[\{E_G(a_1),\dots,E_G(a_k)\} \h (G,X).\]
\end{lemma}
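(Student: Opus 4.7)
The plan is to reduce this statement to foundational results of Dahmani--Guirardel--Osin on hyperbolically embedded subgroups \cite{dahmani2017hyperbolically}. The first step will be to note that, since the action $G \acts \Gamma(G,X)$ is acylindrical, every loxodromic element automatically satisfies the weak proper discontinuity (WPD) condition. Consequently, the single-element version of the statement applies to each $a_i$ individually, yielding $E_G(a_i) \h (G,X)$ by \cite{dahmani2017hyperbolically}*{Theorem 6.8}.

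The next step is to promote this to a joint statement, with non-commensurability supplying the independence input needed to combine the $E_G(a_i)$'s. The key observation will be that for any $i \ne j$ the intersection $E_G(a_i)\cap E_G(a_j)$ is finite: each $E_G(a_i)$ is virtually cyclic, containing $\langle a_i^{n_i}\rangle$ as a finite-index subgroup for some $n_i \ge 1$, so any infinite order element in the intersection would (after passing to finite-index cyclic subgroups) force suitable nonzero powers of $a_i$ and $a_j$ to be conjugate, contradicting the non-commensurability hypothesis. Equivalently, the attracting/repelling fixed-point pairs of $a_i$ and $a_j$ on $\partial \Gamma(G,X)$ are disjoint.

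Given this pairwise independence, I would conclude either by invoking a collection version of the DGO construction directly (as in \cite{dahmani2017hyperbolically}*{Theorem 6.14}), or, alternatively, by adding one elementary subgroup at a time via a combination theorem for hyperbolically embedded families, at each stage checking that $a_{k+1}$ remains loxodromic and WPD with respect to the Cayley graph enlarged by the previously adjoined $E_G(a_i)$'s. The main technical obstacle to expect is the verification that the relative metric $\widehat{\mathrm{d}}_\lambda$ on each $E_G(a_i)$ remains locally finite once all of the other $E_G(a_j)$'s are simultaneously coned off. This amounts to ruling out ``shortcuts'' from a long path inside one $E_G(a_i)$ through the remaining elementary subgroups, and it is precisely where the acylindricity of the action, together with the disjoint-fixed-point property from the previous step, are brought to bear.
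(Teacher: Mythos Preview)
The paper does not prove this lemma at all: it is simply quoted from \cite{antolin2016commensurating}*{Corollary~3.11} as a black box, with no argument given. So there is no ``paper's own proof'' to compare against.

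That said, your sketch is a reasonable outline of how the cited result is actually established. The reduction to WPD via acylindricity is correct, and \cite{dahmani2017hyperbolically}*{Theorem~6.8} does give the single-subgroup case $E_G(a_i)\h (G,X)$. Your observation that pairwise non-commensurability forces $E_G(a_i)\cap E_G(a_j)$ to be finite (equivalently, disjoint fixed-point pairs on $\partial\Gamma(G,X)$) is the right independence input. The cleanest way to finish is not the iterative ``add one subgroup at a time'' route you describe second---which would require re-verifying WPD and loxodromicity in each successively coned-off graph, and this is fiddly---but rather to appeal directly to the geometric separability criterion in \cite{dahmani2017hyperbolically}*{Theorem~4.42} (or its packaging in \cite{antolin2016commensurating}), which handles a finite collection of elementary closures of pairwise independent WPD loxodromics in one step. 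Your first option (``a collection version of the DGO construction'') is the correct one; the second option works in principle but is considerably more laborious than you suggest.
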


Lemmas~\ref{lem:extension of qmorph} and \ref{lem:E(g)_is_hyp_emb} let us extend quasimorphisms defined on $E_G(g)$ to quasimorphisms of $G$, for every $g\in \lox(G,X)$. However, when $G$ has torsion $E_G(g)$ may fail to have any nontrivial homogeneous quasimorphism at all (this is for instance the case if $E_G(g)$ is isomorphic to the infinite dihedral group). The following provides a class of elements such that $Q_h(E_G(g))$ is obviously large.

\begin{definition}[cf. \cite{antolin2016commensurating}*{Definition 5.11}]\label{def:special}
    An element $g\in G$ is \emph{special} with respect to $X$ if $g\in \lox(G,X)$ and $E_G(g)=\angles g$. We denote the set of all special elements by $S_G(X)$.
\end{definition}

\begin{remark}
    If $g\in S_G(X)$ then $Q_h(E_G(g))=\Hom(\angles g,\mathbb R)\cong\Hom(\mathbb Z,\mathbb R)\cong\mathbb R$.
\end{remark}

Every acylindrically hyperbolic group $G$ contains a unique maximal finite normal subgroup $K(G)$, called the \emph{finite radical of $G$}, see \cite{dahmani2017hyperbolically}*{Theorem~2.24}. The quotient $G/K(G)$ is again acylindrically hyperbolic \cite{minasyan2015acylindrical}*{Lemma~3.9} and has trivial finite radical.
The techniques developed in \cites{antolin2016commensurating,hull2017small} provide a bountiful source of special elements in the case when $K(G)=\{1\}$.

\begin{lem}[\cite{antolin2016commensurating}*{Lemmas~5.13 and 5.5}]\label{lem:product with special is special}
    Suppose that $G$ has no non-trivial finite normal subgroups and $s\in S_G(X)$. For every $x\in G\smallsetminus \angles{s}$ and any finite subset $A \subseteq G$ there is some $M\in\mathbb N$ such that if $\abs{m}>M$ then $s^mx\in S_G(X)$ and $s^m x$ is not commensurable to any element from $A$.
\end{lem}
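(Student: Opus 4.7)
The plan is to verify three properties of $s^m x$ separately, each for $|m|$ sufficiently large: (a) $s^m x\in\lox(G,X)$; (b) $E_G(s^m x)=\angles{s^m x}$, so that together with (a) we get $s^m x\in S_G(X)$; and (c) $s^m x$ is not commensurable to any element of the finite set $A$. All three arguments exploit ping-pong dynamics on the hyperbolic Cayley graph $\Gamma(G,X)$, together with the fact that, since $s \in S_G(X)$, the subgroup $\angles{s}$ is the full setwise stabiliser in $G$ of the endpoint pair $\{s^+,s^-\}\subseteq \partial\Gamma(G,X)$ of the axis of $s$, while $x\notin\angles{s}$ forces $x\cdot\{s^+,s^-\}\ne\{s^+,s^-\}$.

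For (a), a standard North--South dynamics argument applies: iterating $s^m x$ alternates a long translation along the axis of $s$ with the bounded perturbation given by $x$, and since $x\cdot\{s^+,s^-\}\ne\{s^+,s^-\}$, the composition $s^m x$ acquires attracting and repelling fixed points close to $s^+$ and $x^{-1}\cdot s^-$ once $|m|$ is large. For (c), I would handle each $a\in A$ individually: if $a \notin \lox(G,X)$ then $a$ is automatically not commensurable to the loxodromic $s^m x$; while if $a\in\lox(G,X)$, a commensurability relation $(s^m x)^n = z a^k z^{-1}$ would force the axis of $s^m x$ to coincide with the axis of some conjugate $zaz^{-1}$, and by acylindricity only finitely many $G$-translates of the axis of $a$ can approach the axis of $s$ within any fixed bounded distance. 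Choosing $|m|$ large enough ensures that the axis of $s^m x$ avoids all of them, simultaneously for all $a\in A$.

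The hard part is (b). A priori $E_G(s^m x)$ is virtually cyclic and contains $\angles{s^m x}$ with finite index, so it might contain a root, an inversion, or a nontrivial finite normal subgroup. I would invoke the algebraic characterisation from \cref{lem:max_elem_sbgp}: any $y \in E_G(s^m x)$ satisfies $y (s^m x)^n y^{-1} = (s^m x)^k$ for some nonzero $n,k$. Reading this equation as an identity between alternating words in $s^m$ and $x$, for $|m|$ large enough the long $s^m$-segments must match up position by position; since $E_G(s)=\angles{s}$ rigidly controls the stabiliser of the $s$-axis, a careful cancellation analysis in $\Gamma(G,X)$ forces $y\in\angles{s^m x}\cdot F$ for some finite subgroup $F\leqslant E_G(s^m x)$. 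Eliminating $F$ is the final subtlety: using \cref{lem:E(g)_is_hyp_emb} together with (c), the pair $\{E_G(s),E_G(s^m x)\}$ is hyperbolically embedded in $(G,X)$, and a small-cancellation-style argument combined with $K(G)=\{1\}$ rules out nontrivial finite subgroups of $E_G(s^m x)$ for $|m|$ large.
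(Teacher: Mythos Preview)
The paper does not give a proof of this lemma at all: it is simply quoted from \cite{antolin2016commensurating}*{Lemmas~5.13 and 5.5}, so there is no argument in the present paper to compare yours against.

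As for your sketch on its own merits: parts (a) and (c) are reasonable outlines, though (c) would need a sharper statement about where the endpoint pair of $s^m x$ actually lands as $|m|\to\infty$, and why only finitely many conjugates of each $a\in A\cap\lox(G,X)$ could share that endpoint pair. Part (b), however, has a genuine gap. Your final step invokes the hyperbolic embedding of $\{E_G(s),E_G(s^m x)\}$ together with $K(G)=\{1\}$ to ``rule out nontrivial finite subgroups of $E_G(s^m x)$'', but this does not follow: hyperbolic embedding constrains how the members of the family sit inside $G$, not their internal torsion, and there are many hyperbolically embedded virtually cyclic subgroups with nontrivial finite normal subgroups (infinite dihedral groups, for instance). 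The hypothesis $K(G)=\{1\}$ is a statement about $G$, not about $E_G(s^m x)$. Likewise, the intermediate conclusion ``$y\in\angles{s^m x}\cdot F$ for some finite subgroup $F\leqslant E_G(s^m x)$'' is tautological for any element of a virtually cyclic group, so the ``careful cancellation analysis'' has not been shown to do any work. The proof in \cite{antolin2016commensurating} needs a more delicate argument exploiting that $E_G(s)=\angles{s}$ is torsion-free infinite cyclic to pin down how any putative torsion or root in $E_G(s^m x)$ would have to act on the $s$-axis.
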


Given a non-elementary subgroup $H \leqslant G$ such that $H \cap \lox(G,X) \neq\emptyset$, the subgroup 
\[
E_G(H) \coloneqq \bigcap\bigbraces{E_G(h) \bigmid h \in H \cap \lox(G,X)} \leqslant G
\]
is the maximal finite subgroup of $G$ normalized by $H$, see \cite{antolin2016commensurating}*{Lemma~5.6} or \cite{hull2017small}*{Lemma~5.5} (in particular, $E_G(G)=K(G)$). \cref{lem:max_elem_sbgp} implies that $E_G(H)$ contains the centralizer $\cent_G(H)$.

\begin{lem}\label{lem:normal_subgps_are_suitable} If $K(G)=\{1\}$ and $N \lhd G$ is any non-trivial normal subgroup of $G$ then $N$ is non-elementary, $N \cap \lox(G,X) \neq \emptyset$ and $E_G(N)=\cent_G(N)=\{1\}$. Moreover, $S_G(X) \cap N \neq \emptyset$.    
\end{lem}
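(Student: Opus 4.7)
My plan is to tackle the four assertions ($N$ is infinite, $N$ is non-elementary with $N\cap\lox(G,X)\neq\emptyset$, $E_G(N)=\cent_G(N)=\{1\}$, and $S_G(X)\cap N\neq\emptyset$) in sequence. First, $N$ must be infinite, since any finite normal subgroup of $G$ is contained in $K(G)=\{1\}$. To rule out $N$ being elementary, I would argue by contradiction: if $N$ were virtually infinite cyclic, it would contain a characteristic infinite cyclic subgroup $\angles c$ of finite index, and being characteristic in $N\lhd G$ this $\angles c$ would itself be normal in $G$, so $\cent_G(c)$ would have index at most $2$ in $G$. Since $N$ is an infinite normal subgroup of $G$ and $K(G)=\{1\}$, by standard results on acylindrical actions $N$ contains a loxodromic element, and this together with $N$ being virtually $\angles c$ forces $c\in\lox(G,X)$. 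Then by \cref{lem:max_elem_sbgp}, $\cent_G(c)\leqslant E_G(c)$, which is virtually cyclic, making $G$ virtually cyclic and contradicting $|\partial\gax|>2$. Hence $N$ is non-elementary, and in particular $N\cap\lox(G,X)\neq\emptyset$.

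For the equalities $E_G(N)=\cent_G(N)=\{1\}$, the inclusion $\cent_G(N)\leqslant E_G(N)$ is immediate from \cref{lem:max_elem_sbgp}: any element centralizing $N$ also centralizes every $h\in N\cap\lox(G,X)$ and hence lies in $E_G(h)$, so it lies in the intersection defining $E_G(N)$. Next, $E_G(N)$ is finite: for any two independent loxodromics $a,b\in N$, $E_G(N)\leqslant E_G(a)\cap E_G(b)$, and the intersection of two distinct maximal elementary subgroups is finite (any infinite-order element of the intersection would force $E_G(a)=E_G(b)$ by the maximality part of \cref{lem:max_elem_sbgp}). Moreover, $E_G(N)$ is normal in $G$: for $g\in G$ one has $gE_G(N)g^{-1}=\bigcap_{h}E_G(ghg^{-1})$, and $N\lhd G$ makes $g$-conjugation permute $N\cap\lox(G,X)$, so the intersection is unchanged. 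Being a finite normal subgroup of $G$, $E_G(N)\leqslant K(G)=\{1\}$, and hence $\cent_G(N)=E_G(N)=\{1\}$.

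For the final assertion $S_G(X)\cap N\neq\emptyset$, I would begin with some $s\in S_G(X)$, which is nonempty in our setting by the techniques behind \cref{lem:product with special is special}. Since $\cent_G(N)=\{1\}$ and $\cent_G(s)=E_G(s)=\angles s$ (because $s$ is special), some $n\in N$ lies outside $\angles s$. The conjugate $s':=nsn^{-1}$ is then another special element of $G$, and since the normalizer of $\angles s$ in $G$ coincides with $\angles s$ itself (a direct consequence of \cref{lem:max_elem_sbgp}), the pair $s,s'$ is independent. A straightforward computation shows that $s^p(s')^{-p}=s^pns^{-p}n^{-1}=[s^p,n]$ lies in $N$ for every positive integer $p$, by normality of $N$. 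The main obstacle is to show that for $p$ sufficiently large this element belongs to $S_G(X)$; I would tackle this by applying \cref{lem:product with special is special} to $s$ with the auxiliary input $x=(s')^{-p}$ (or a suitable variant), controlling how the threshold $M$ scales with $p$, or alternatively by a direct small-cancellation argument based on the north--south dynamics of the independent special elements $s$ and $s'$ on $\partial\gax$.
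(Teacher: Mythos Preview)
Your arguments for the first three assertions are correct and considerably more explicit than the paper's own proof, which simply cites \cite{chifan2024small}*{Lemma 3.23} for those facts and \cite{antolin2016commensurating}*{Lemma~5.12} or \cite{hull2017small}*{Corollary 5.7} for the last one. (The claim that a virtually infinite cyclic $N$ contains a characteristic infinite cyclic subgroup is true but deserves a line of justification: pass to the quotient by the maximal finite normal subgroup of $N$, take the unique index $\le 2$ infinite cyclic subgroup there, pull back, and then take the torsion-free part of the center.)

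The genuine gap is in your treatment of $S_G(X)\cap N\neq\emptyset$, and you flag it yourself. To show $s^p(s')^{-p}\in S_G(X)$ you propose applying \cref{lem:product with special is special} with $x=(s')^{-p}$, but the threshold $M$ in that lemma depends on $x$, hence on $p$, and nothing prevents $M=M_p$ from growing faster than $p$; so you cannot arrange $|m|>M_p$ with $m=p$. Your fallback, ``a direct small-cancellation argument based on north--south dynamics'', is a hope rather than an argument: carrying it out would amount to reproving a quantitative strengthening of \cref{lem:product with special is special}. The cited results avoid this trap entirely by working \emph{inside} $N$ from the start, using that $N$ is non-elementary with $N\cap\lox(G,X)\neq\emptyset$ and $E_G(N)=\{1\}$ to produce special elements directly in $N$, rather than trying to push special elements of $G$ into $N$ via commutators. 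A secondary issue is that you invoke $S_G(X)\neq\emptyset$ with only a gesture towards ``the techniques behind \cref{lem:product with special is special}''; that lemma presupposes a special element and does not supply the first one.
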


\begin{proof}
    The fact that $N$ is non-elementary, contains loxodromic elements and satisfies $E_G(N)=\{1\}$ was observed in \cite{chifan2024small}*{Lemma 3.23}.
    The last claim that $S_G(X) \cap N \neq \emptyset$ was proved in \cite{antolin2016commensurating}*{Lemma~5.12} or \cite{hull2017small}*{Corollary 5.7}.  
\end{proof}

\begin{lemma}\label{lem:normal has special}
    Suppose that $K(G)=\{1\}$  and let $\{N_i\}_{i=1}^k$ be a finite collection of non-trivial normal subgroups in $G$.
    Then the intersection $\bigcap_{i=1}^n N_i$ is an infinite normal subgroup of $G$.
    In particular, $S_G(X)\cap \bigcap_{i=1}^n N_i$ is non-empty.
\end{lemma}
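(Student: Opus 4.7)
The natural approach is induction on $k$, with the heavy lifting at each step performed by \cref{lem:normal_subgps_are_suitable}. The base case $k=1$ is immediate: the lemma tells us that any non-trivial normal subgroup of $G$ is non-elementary (hence infinite) and that $S_G(X) \cap N_1$ is non-empty.

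For the inductive step, set $N' \coloneqq \bigcap_{i=1}^{k-1} N_i$, which by the inductive hypothesis is a non-trivial normal subgroup of $G$. The main point is to show that $N' \cap N_k$ is itself non-trivial; once this is established, one final application of \cref{lem:normal_subgps_are_suitable} to the (normal) intersection $N' \cap N_k$ simultaneously yields that this subgroup is infinite and that it meets $S_G(X)$. To prove non-triviality I would use the classical commutator trick: since both $N'$ and $N_k$ are normal in $G$, one checks directly that the commutator subgroup $[N', N_k]$ is contained in $N' \cap N_k$. If $[N', N_k]$ were trivial, then $N'$ would centralize $N_k$, so $N' \subseteq \cent_G(N_k)$. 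But \cref{lem:normal_subgps_are_suitable} applied to $N_k$ asserts that $\cent_G(N_k) = \{1\}$, contradicting $N' \neq \{1\}$.

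The only substantive ingredient here is the equality $\cent_G(N) = \{1\}$ for non-trivial normal subgroups $N$, which encodes both the acylindrical hyperbolicity of $G$ and the triviality of its finite radical $K(G)$. There is really no obstacle to speak of: once this input is available, the induction step collapses to the one-line commutator argument above, and the passage from ``non-trivial'' to ``infinite and containing a special element'' is again just \cref{lem:normal_subgps_are_suitable}.
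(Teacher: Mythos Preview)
Your proof is correct and in fact more self-contained than the paper's. The paper handles the intersection in one stroke by invoking an external result (\cite{minasyan2015acylindrical}*{Lemma~6.24}) stating that in an acylindrically hyperbolic group a finite intersection of infinite normal subgroups is infinite, and then applies \cref{lem:normal_subgps_are_suitable} once to the intersection. You instead induct on $k$ and use the commutator trick $[N',N_k]\subseteq N'\cap N_k$, ruling out $[N',N_k]=\{1\}$ via the equality $\cent_G(N_k)=\{1\}$ already supplied by \cref{lem:normal_subgps_are_suitable}. The upshot is that your argument avoids the external citation entirely and shows the lemma follows directly from \cref{lem:normal_subgps_are_suitable} alone; the paper's version is terser but black-boxes the non-triviality of the intersection.
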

\begin{proof}
By the assumptions, each $N_i \lhd G$ is infinite, and since $G$ is acylindrically hyperbolic,  the intersection $N\coloneqq \bigcap_{i=1}^k N_i$ is also an infinite normal subgroup of $G$ (see, for example,  \cite{minasyan2015acylindrical}*{Lemma 6.24}).  The rest of the claim now follows from \cref{lem:normal_subgps_are_suitable}.
\end{proof}

The next theorem is essentially a corollary of the main technical result from \cite{antolin2016commensurating}.

\begin{thm}\label{thm:restr_to_norm_sbgps_is_not_comm}
Suppose that $K(G)=\{1\}$ and $\phi \in \aut(G) \smallsetminus \inn(G)$. Then for every non-trivial normal subgroup $N \lhd G$ there is $h \in \lox(G,X) \cap N$ such that $h$ and $\phi(h)$ are not commensurable in $G$. In particular, $\aut_{sc}(G)=\inn(G)$.
\end{thm}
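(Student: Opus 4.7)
The plan is to deduce the statement as a corollary of the main technical construction of \cite{antolin2016commensurating}, combined with the abundance of special elements inside every non-trivial normal subgroup of $G$ provided by \cref{lem:normal_subgps_are_suitable} and \cref{lem:product with special is special}. The main theorem of \cite{antolin2016commensurating} already asserts that for $\phi \in \aut(G)\smallsetminus \inn(G)$ there exists some $g \in \lox(G,X)$ with $g$ and $\phi(g)$ non-commensurable in $G$; what needs to be added is that the witness $h$ can be placed inside the prescribed normal subgroup $N$.

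First I would use \cref{lem:normal_subgps_are_suitable} to pick a special element $s \in S_G(X) \cap N$. The idea is then to look for $h$ among elements of the form $s^m y$, where $y \in N$ and $m$ is a large integer. By \cref{lem:product with special is special}, as long as $y \notin \langle s\rangle$, for all but finitely many $m$ the element $s^m y$ is special (hence loxodromic); moreover it lies in $N$ since both $s$ and $y$ do. The task is therefore to choose $y \in N$ in such a way that $h = s^m y$ and $\phi(h)=\phi(s)^m \phi(y)$ remain non-commensurable for sufficiently large $m$. This is exactly the kind of ``$s^m y$''-construction used in \cite{antolin2016commensurating} to produce a single witness of $\phi \notin \inn(G)$ somewhere in $G$, and I would reuse its conclusions almost verbatim.

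The main obstacle is to verify that this construction can be carried out entirely inside $N$: one has to ensure that every special element used as a ``test element'' in \cite{antolin2016commensurating}, as well as any element that has to be avoided by a commensurability argument, may be chosen in $N$. The second part of \cref{lem:product with special is special} supplies special elements in $N$ which avoid any prescribed finite family of commensurability classes, while $E_G(N) = \cent_G(N) = \{1\}$ (from \cref{lem:normal_subgps_are_suitable}) ensures that no non-trivial element of $G$ centralizes $N$. Together these facts should make $N$ ``generic enough'' in $G$ that the hyperbolic/dynamical arguments of \cite{antolin2016commensurating} apply to it with only cosmetic changes.

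For the ``in particular'' claim that $\aut_{sc}(G) = \inn(G)$: the inclusion $\inn(G) \subseteq \aut_{sc}(G)$ is \cref{lem:sc_auts_form_a_normal_sbgps}(i). Conversely, suppose towards a contradiction that some $\phi \in \aut_{sc}(G) \smallsetminus \inn(G)$. Applying the first part of the theorem with $N = G$ produces $h \in \lox(G,X)$ such that $h$ and $\phi(h)$ are not commensurable in $G$. On the other hand, since $\phi$ is strongly commensurating, there exist $m \in \mathbb{N}$ and $u \in G$ with $\phi(h^m) = u h^m u^{-1}$, whence $\phi(h)^m = u h^m u^{-1}$ and so $\phi(h)$ is (strongly) commensurable with $h$, a contradiction. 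Hence $\aut_{sc}(G) \subseteq \inn(G)$.
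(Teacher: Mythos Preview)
Your proposal has a genuine gap: you never actually produce the element $y \in N$ or prove that $s^m y$ and $\phi(s^m y)$ are non-commensurable for large $m$. Saying that this ``is exactly the kind of $s^m y$-construction used in \cite{antolin2016commensurating}'' and that $N$ ``should'' be generic enough is not an argument. The difficulty is real: \cref{lem:product with special is special} lets you avoid finitely many prescribed commensurability classes, but $\phi(s^m y)$ varies with $m$, so you are not avoiding a fixed finite set. Making this work would essentially require re-proving the relevant parts of \cite{antolin2016commensurating} inside $N$, which you do not do.

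The paper's proof is both simpler and quite different in structure. It argues by contradiction: assume every $h \in \lox(G,X) \cap N$ is commensurable to $\phi(h)$. By \cref{lem:normal_subgps_are_suitable} one has $E_G(N) = \cent_G(N) = \{1\}$, so \cite{antolin2016commensurating}*{Corollary~7.4} applies directly to the subgroup $N$ and gives some $w \in G$ with $\phi(g) = wgw^{-1}$ for all $g \in N$. Replacing $\phi$ by its composition with conjugation by $w^{-1}$, one may assume $\phi|_N = \id$. Then for any $x \in G$ and $g \in N$,
\[
xgx^{-1} = \phi(xgx^{-1}) = \phi(x)\,g\,\phi(x)^{-1},
\]
so $x^{-1}\phi(x) \in \cent_G(N) = \{1\}$, forcing $\phi = \id$ and contradicting $\phi \notin \inn(G)$. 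Thus the witness in $N$ is not constructed at all; its existence falls out of the fact that \cite{antolin2016commensurating}*{Corollary~7.4} is already stated for suitable subgroups, together with a short centralizer argument to pass from $N$ back to $G$. Your derivation of the ``in particular'' clause is fine.
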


\begin{proof}
Suppose that for all $h \in \lox(G,X) \cap N$, $h$ is commensurable with $\phi(h)$ in $G$.
 By \cref{lem:normal_subgps_are_suitable},  $N$ is non-elementary, $N \cap \lox(G,X) \neq \emptyset$  and $E_G(N)=C_G(N)=\{1\}$. Therefore, we can apply \cite{antolin2016commensurating}*{Corollary~7.4} to conclude that there exists $w \in G$ such that $\phi(g)=w g w^{-1}$, for all $g \in N$.
 
 After replacing $\phi$ by its composition with conjugation by $w^{-1}$, we can suppose that $w=1$, \emph{i.e.}\ $\phi(g)=g$, for all $g \in N$.
 Now, for every $x \in G$ and arbitrary $g \in N$ we have 
 \[
    xgx^{-1}=\phi(xgx^{-1})=\phi(x) g \phi(x)^{-1},
 \]
 hence $x^{-1}\phi(x) \in \cent_G(N)$. Since $C_G(N)$ is trivial, we conclude that $\phi$ is the identity automorphism, contradicting the assumption that $\phi \notin \inn(G)$.
\end{proof}


\section{Proof of \cref{thm:intro:main_thm}}\label{sec:proof}
As before, throughout this section $G$ will be an acylindrically hyperbolic group with a generating set $X$ such that $\gax$ is hyperbolic, $|\partial \gax|>2$ and the natural action of $G$ on $\gax$ is acylindrical.

We will  need the next two results, that are easy consequences of the Algebraic Dehn Filling Theorem of Dahmani, Guirardel, and Osin \cite{dahmani2017hyperbolically}*{Theorem 7.19}.

\begin{lemma}[\cite{antolin2016commensurating}*{Lemma 8.2}]\label{lem:free wpd}
    Given $g\in\lox(G,X)$, if $\angles{g^n}\triangleleft E_G(g)$ and $\abs{n}$ is large enough, then the normal closure $\aangles{g^n}$ is free and every non-trivial element in it belongs to $\lox(G,X)$.
\end{lemma}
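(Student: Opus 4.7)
The plan is to obtain this as a direct application of the Algebraic Dehn Filling Theorem of Dahmani--Guirardel--Osin \cite{dahmani2017hyperbolically}*{Theorem 7.19} to the singleton family $\{E_G(g)\}$. First, since $g\in\lox(G,X)$, \cref{lem:E(g)_is_hyp_emb} (applied with $k=1$, which needs no commensurability hypothesis) gives $E_G(g)\h(G,X)$. The Dehn Filling Theorem then provides a finite subset $F\subseteq E_G(g)\smallsetminus\{1\}$ such that whenever a normal subgroup $N\triangleleft E_G(g)$ avoids $F$, the following two consequences hold:
\begin{itemize}
    \item[(a)] the normal closure $\aangles{N}$ in $G$ decomposes as a free product $\Ast_{t\in T} tNt^{-1}$, where $T$ is a suitable transversal for $E_G(g)$ in $G$;
    \item[(b)] every element of $\aangles{N}\smallsetminus\{1\}$ which is not conjugate in $G$ into $N$ is loxodromic with respect to the action $G\curvearrowright\gax$.
\end{itemize}

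Now apply this with $N=\angles{g^n}$. Since $g$ has infinite order, the intersection $\angles{g^n}\cap F$ is empty as soon as $\abs{n}$ exceeds every exponent $k$ for which $g^k\in F$ (a finite condition, because $F$ is finite and meets $\angles{g}$ in only finitely many elements). Together with the hypothesis $\angles{g^n}\triangleleft E_G(g)$, this makes the Dehn Filling Theorem applicable.

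From (a), $\aangles{g^n}\cong \Ast_{t\in T} t\angles{g^n}t^{-1}$; since each factor is infinite cyclic, this is a free group. For the second conclusion, let $w\in\aangles{g^n}\smallsetminus\{1\}$. Either $w$ is conjugate in $G$ to a non-trivial element of $\angles{g^n}$, in which case $w$ is a conjugate of a non-zero power of $g$ and therefore lies in $\lox(G,X)$ (recall that being loxodromic is preserved under conjugation and under passage to non-zero powers); or $w$ is not conjugate into $\angles{g^n}$, in which case $w\in\lox(G,X)$ by (b).

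There is essentially no obstacle beyond correctly quoting the Dehn Filling machinery: the only bookkeeping is the choice of $n$ ensuring $\angles{g^n}\cap F=\emptyset$, which is straightforward. The key conceptual content is entirely packaged inside \cite{dahmani2017hyperbolically}*{Theorem 7.19}.
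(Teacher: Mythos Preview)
The paper does not give its own proof of this lemma: it cites the result from \cite{antolin2016commensurating}*{Lemma~8.2} and remarks that it (together with \cref{lem:dehn filling}) is an easy consequence of the Algebraic Dehn Filling Theorem \cite{dahmani2017hyperbolically}*{Theorem~7.19}. Your proposal carries out precisely that derivation---applying the Dehn Filling Theorem to the hyperbolically embedded family $\{E_G(g)\}\h(G,X)$ and choosing $\abs{n}$ large enough that $\angles{g^n}$ misses the finite ``bad'' set---so it matches the intended approach.
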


\begin{lemma}\label{lem:dehn filling}
    Suppose that $g\in\lox(G,X)$ is not commensurable to some element $h \in G$. If $\angles{g^n}\triangleleft E_G(g)$ and $\abs{n}$ is large enough, then $\angles{h}\cap\aangles{g^n}=\{1\}$.
\end{lemma}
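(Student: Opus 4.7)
My plan is to split along whether $h$ is loxodromic in $\gax$, treating the non-loxodromic case directly via \cref{lem:free wpd} and the loxodromic case by a two-peripheral application of the Algebraic Dehn Filling Theorem of Dahmani--Guirardel--Osin \cite{dahmani2017hyperbolically}*{Theorem~7.19}.

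Suppose first that $h\notin\lox(G,X)$. In an acylindrical action on a hyperbolic space every element is either elliptic or loxodromic, and a non-zero power of a loxodromic element is again loxodromic; hence no non-trivial element of $\angles h$ is loxodromic. On the other hand, once $\abs n$ is large enough, \cref{lem:free wpd} guarantees that every non-trivial element of $\aangles{g^n}$ is loxodromic. These two observations force $\angles h\cap\aangles{g^n}=\{1\}$.

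Now suppose $h\in\lox(G,X)$. Since $g$ and $h$ are non-commensurable loxodromic elements, \cref{lem:E(g)_is_hyp_emb} yields $\{E_G(g),E_G(h)\}\h(G,X)$. We apply \cite{dahmani2017hyperbolically}*{Theorem~7.19} to this hyperbolically embedded pair with the normal subgroups $N_g\coloneqq\angles{g^n}\triangleleft E_G(g)$ and $N_h\coloneqq\{1\}\triangleleft E_G(h)$: for $\abs n$ sufficiently large, both $N_g$ and $N_h$ avoid the finite exceptional subset of $G\smallsetminus\{1\}$ provided by the theorem (the trivial subgroup automatically, and $\angles{g^n}$ because the relative length of $g^n$ in $E_G(g)$ tends to infinity with $n$). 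The part of the theorem asserting $\aangles{N_g\cup N_h}\cap E_G(h)=N_h$ then gives
\[
\aangles{g^n}\cap E_G(h)\;=\;\{1\}.
\]
Since $\angles h\subseteq E_G(h)$, we conclude $\angles h\cap\aangles{g^n}=\{1\}$.

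The only mildly subtle point is checking that the trivial subgroup is a valid ``deep'' Dehn filling input on the $E_G(h)$ side; this is immediate from the statement of \cite{dahmani2017hyperbolically}*{Theorem~7.19}, whose finite exceptional set lies in $G\smallsetminus\{1\}$. Apart from this, the argument is a direct combination of \cref{lem:free wpd}, \cref{lem:E(g)_is_hyp_emb}, and the cited Dehn filling theorem, with $\abs n$ chosen large enough to simultaneously satisfy both the hypothesis of \cref{lem:free wpd} and the deepness condition.
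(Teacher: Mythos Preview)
Your proof is correct and follows essentially the same approach as the paper: split on whether $h$ is loxodromic, handle the non-loxodromic case via \cref{lem:free wpd}, and in the loxodromic case apply \cref{lem:E(g)_is_hyp_emb} together with the Dahmani--Guirardel--Osin Dehn filling theorem to the pair $\{E_G(g),E_G(h)\}$ with trivial filling on the $E_G(h)$ side. The paper phrases the second case slightly differently (extracting a finite exceptional set $\mathcal F\subset E_G(g)\smallsetminus\{1\}$ and noting that $\angles{g^n}$ avoids $\mathcal F$ for large $\abs n$ simply because $\mathcal F$ is finite and $g$ has infinite order), but this is the same argument.
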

\begin{proof}
If $h\notin\lox(G,X)$, we can apply \cref{lem:free wpd} to ensure that every non-trivial element of $\aangles{g^n}$ is loxodromic, so $\langle h \rangle \cap \aangles{g^n}=\{1\}$ and we are done.

Thus we can further suppose that $h \in \lox(G,X)$.
Since $h, g\in\lox(G,X)$ are non-commensurable, the pair $\{E_G(h),E_G(g)\}$ is hyperbolically embedded in $(G,X)$, by \cref{lem:E(g)_is_hyp_emb}. According to
    \cite{dahmani2017hyperbolically}*{Theorem 7.19}, there is a finite subset $\mathcal F\subset E_G(g)\smallsetminus \{1\}$ such that if $N\triangleleft E_G(g)$ is disjoint from $\mathcal F$, then $E_G(h)$ intersects trivially $\aangles{N}$, where the normal closure is taken in the whole of $G$.
    Since $E_G(g)$ is virtually cyclic, there is an $M \in \mathbb{N}$ such that $\angles{g^n}\cap \mathcal F=\emptyset$ for every $\abs{n}\geq M$, which concludes the proof.
\end{proof}

The following proposition is the key technical step.

\begin{proposition}\label{prop:special non commensurable}
    Suppose that $K(G)=\{1\}$, $\phi\in \aut(G)\smallsetminus\inn(G)$, $N \lhd G$ is a non-trivial normal subgroup and $A \subseteq G$ is a finite subset. Then there exists $g\in S_G(X) \cap N$ such that $\phi(g)\in S_G(X) \cap N$ and $\angles{\phi(g)}\cap \aangles{g}=\{1\}$. Moreover, 
    $g$ is not commensurable to $\phi(g)$, and neither of these two elements is commensurable to any element of $A$  in $G$.
\end{proposition}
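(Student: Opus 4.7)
My plan is to produce $g$ in three stages. First, I reduce to working inside the normal subgroup $N_1\coloneqq N\cap\phi^{-1}(N)$, which is non-trivial by \cref{lem:normal has special}; any $h\in N_1$ then automatically satisfies $h,\phi(h)\in N$. Applying \cref{thm:restr_to_norm_sbgps_is_not_comm} to $N_1$ furnishes an element $h_0\in\lox(G,X)\cap N_1$ with $h_0$ not commensurable to $\phi(h_0)$.

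Next, I upgrade $h_0$ to a special element whose $\phi$-image is also special. Via \cref{lem:normal has special}, I pick $s\in S_G(X)\cap N_1$ chosen so that also $\phi(s)\in S_G(X)\cap N$, and set $g_0\coloneqq s^m h_0$. By \cref{lem:product with special is special}, for $m$ large enough $g_0\in S_G(X)\cap N$ and $g_0$ is not commensurable with any element of the prescribed finite set $A$ (enlarged to include $\phi^{-1}(A)$ and a few auxiliary elements). Simultaneously $\phi(g_0)=\phi(s)^m\phi(h_0)$ is conjugate to $\phi(h_0)\phi(s)^m$, and a second application of \cref{lem:product with special is special}, with $\phi(s)$ as the special element, places $\phi(g_0)$ in $S_G(X)\cap N$. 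Non-commensurability of $g_0$ and $\phi(g_0)$ (and non-commensurability of each with elements of $A$) for large $m$ follows by combining the non-commensurability of $h_0$ and $\phi(h_0)$ with the standard observation that the quasi-axes of $s^m h_0$ (resp.\ $\phi(s)^m\phi(h_0)$) approach the axis of $s$ (resp.\ $\phi(s)$) as $m\to\infty$.

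Finally, to enforce $\angles{\phi(g)}\cap\aangles{g}=\{1\}$, I invoke \cref{lem:dehn filling} applied to the loxodromic pair $(g_0,\phi(g_0))$: for all sufficiently large $n$, $\angles{\phi(g_0)}\cap\aangles{g_0^n}=\{1\}$. Since $g_0^n$ is no longer special when $n>1$, I cannot simply replace $g_0$ by $g_0^n$; instead the large Dehn-filling parameter must be absorbed into the construction of the previous stage. Concretely, taking the exponent $m$ in $g_0=s^m h_0$ large enough from the outset ensures that the normal closure $\aangles{g_0}$ itself (not merely $\aangles{g_0^n}$) already meets $\angles{\phi(g_0)}$ trivially, because the Dehn-filling threshold for the pair $(s,\phi(s))$ transfers to $(g_0,\phi(g_0))$ through the axis-stability mentioned above. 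Setting $g\coloneqq g_0$ then completes the construction.

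The chief obstacle is the second stage: producing a special $s\in N_1$ whose image $\phi(s)$ is special \emph{with respect to the same generating set} $X$. Automorphisms need not preserve $\lox(G,X)$ or $S_G(X)$ for a fixed acylindrical action, so some additional care is required. One option is to reduce to a universal acylindrical action (in the sense of Abbott--Balasubramanya--Osin), for which the loxodromic set is characteristic and hence $\phi$-invariant; another is a direct iterative density argument combining \cref{lem:normal has special} and \cref{lem:product with special is special} to construct $s$ by successive perturbations inside $N_1\cap\phi^{-1}(N_1)$.
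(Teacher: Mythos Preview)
Your plan has a genuine gap in the third stage, and the fix you sketch does not work with the lemmas at hand. \cref{lem:dehn filling} controls $\aangles{g_0^n}$ for large $n$ relative to a \emph{fixed} loxodromic $g_0$; it says nothing about the family $\aangles{s^m h_0}$ as $m$ varies. Your claim that ``the Dehn-filling threshold for the pair $(s,\phi(s))$ transfers to $(g_0,\phi(g_0))$ through axis-stability'' would require a uniform version of the filling theorem over a one-parameter family of hyperbolically embedded pairs, which is neither stated in the paper nor obvious. Similarly, the appeal to quasi-axis convergence to justify non-commensurability of $g_0$ and $\phi(g_0)$ for large $m$ is not supported by any of the black-boxed results; \cref{lem:product with special is special} only gives non-commensurability with a \emph{fixed} finite set, not with the moving target $\phi(g_0)=\phi(s)^m\phi(h_0)$.

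The paper avoids both difficulties by reversing the order of construction. It first applies the Dehn filling once, to the initial element $h$, choosing $n$ so that $\aangles{h^n}$ is free with all non-trivial elements loxodromic and $\langle\phi(h)\rangle\cap\aangles{h^n}=\{1\}$. It then picks the special element $s$ \emph{inside} $\aangles{h^n}\cap\aangles{\phi(h^n)}$, and builds $\phi(g)\coloneqq s^m\phi(h^n)$ directly as a special element via \cref{lem:product with special is special}. The element $g\coloneqq\phi^{-1}(s^m)h^n$ then automatically lies in $\aangles{h^n}$ (since $\phi^{-1}(s)\in\aangles{h^n}$), so $\aangles{g}\subseteq\aangles{h^n}$ misses $\langle\phi(g)\rangle$ for free. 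This also dissolves your ``chief obstacle'': specialness of $g$ follows from specialness of $\phi(g)$ because $E_G(\cdot)=\langle\cdot\rangle$ is an algebraic condition preserved by automorphisms, and loxodromicity of $g$ comes from membership in $\aangles{h^n}$. No universal acylindrical action or iterative density argument is needed.
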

\begin{proof}
    We start by applying \cref{lem:normal has special} and \cref{thm:restr_to_norm_sbgps_is_not_comm} to find \[h\in\lox(G,X) \cap N \cap \phi^{-1}(N)\] such that $h$ is not commensurable to $\phi(h)$.
    By \cref{lem:free wpd} and \cref{lem:dehn filling}, we may choose  $n \in \mathbb{N}$ such that every non-trivial element in $\aangles{h^n}$ is loxodromic and 
  \begin{equation}\label{eq:phi(h)_cap_h^n}
        \angles{\phi(h)}\cap \aangles{h^n}=\{1\}. 
    \end{equation}

    By \cref{lem:normal has special}, there exists a special element \[s\in N \cap \phi(N) \cap \aangles {h^n} \cap \aangles{\phi(h^n)}\cap S_G(X).\]
    Since $s\in \aangles{h^n}$ and $h$ (hence, $\phi(h)$) has infinite order, \eqref{eq:phi(h)_cap_h^n} implies that 
    $\phi(h^n) \notin \angles{s}$, so we can apply \cref{lem:product with special is special} to find some $m \in \mathbb{N}$ such that $s^m\phi(h^n)\in S_G(X)$ and this element is not commensurable to any element from the finite subset $A \cup \phi(A)$. We claim that the $\phi$-preimage of the above element
    \[
        g\coloneqq \phi^{-1}(s^m)h^n \in G
    \]
    has the required properties.

    First, since $h \in N \cap \phi^{-1}(N)$ and $s \in N \cap \phi(N)$, we see that $g$ and $\phi(g)$ are both in $N$.
    In view of \eqref{eq:phi(h)_cap_h^n} and $s\in \aangles{h^n}$, we see that $\angles{\phi(g)}=\angles{s^m\phi(h^n)}$ has trivial intersection with $\aangles{h^n}$.
    Since $s\in \aangles{\phi(h^n)}=\phi(\aangles{h^n})$, it follows that 
   $\phi^{-1}(s) \in \aangles{h^n}$, so
    $g\in \aangles{h^n}$. In particular, $\aangles{g}\leqslant \aangles{h^n}$ intersects trivially $\angles{\phi(g)}$. Since $\phi(g)$ has infinite order, the latter also shows that $g$ is not commensurable to $\phi(g)$ in $G$.

    Since $g$ is a non-trivial element of $\aangles{h^n}$, it is in $\lox(G,X)$, by the choice of $n$. It follows that $g\in S_G(X)$ because $E_G(\phi(g))=\angles{\phi(g)}$ and the property $E_G(g)= \angles g$ is preserved under automorphisms.

    Finally, $\phi(g)=s^m\phi(h^n)$ is not commensurable to an element of $A \cup \phi(A)$, by construction, which also implies that $g$ is not commensurable to any element of $A$ in $G$.
\end{proof}

\begin{thm}\label{thm:main_with_trivial_radical}
    Let $G$ be an acylindrically hyperbolic group with $K(G)=\{1\}$ and $X$ a generating set of $G$ such that $\gax$ is hyperbolic, $|\partial \gax|>2$ and the natural action of $G$ on $\gax$ is acylindrical.
    Then for every $\phi \in \aut(G) \smallsetminus \inn(G)$ there exist $q \in Q_h(G)$ and  $g \in  S_G(X)$ such that
    \begin{enumerate}[(a)]
        \item  $\phi(g) \in S_G(X)$, 
        \item the restriction of $q$ to $\angles{g}$ is unbounded, 
        \item the restriction of $q \circ \phi$ to $\angles{g}$ is zero, 
        \item the difference $q-q\circ \phi:G \to \mathbb{R}$ is not a homomorphism.
    \end{enumerate}
\end{thm}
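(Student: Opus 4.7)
The plan is to reduce to Proposition~\ref{prop:special non commensurable} applied to the normal subgroup $N \coloneqq [G,G]$. Since $G$ is acylindrically hyperbolic it contains a non-abelian free subgroup (via standard ping-pong arguments on two non-commensurable loxodromic elements, whose existence is guaranteed by $|\partial \Gamma(G,X)|>2$), so in particular $[G,G] \neq \{1\}$. Applying the proposition with this $N$ and $A = \emptyset$ furnishes an element $g \in S_G(X) \cap [G,G]$ such that $\phi(g) \in S_G(X) \cap [G,G]$ and $g$ is not commensurable to $\phi(g)$ in $G$.

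Next, since $g$ and $\phi(g)$ are non-commensurable loxodromic elements, Lemma~\ref{lem:E(g)_is_hyp_emb} tells us that the family of maximal elementary subgroups
\[
\{E_G(g),\, E_G(\phi(g))\} = \{\langle g \rangle,\, \langle \phi(g) \rangle\}
\]
is hyperbolically embedded in $(G,X)$, where the equality uses the fact that both $g$ and $\phi(g)$ are special (\cref{def:special}). I then apply Lemma~\ref{lem:extension of qmorph} with the homogeneous quasimorphisms $q_1 \colon \langle g \rangle \to \mathbb{R}$, $q_1(g^n) = n$, and $q_2 \colon \langle \phi(g) \rangle \to \mathbb{R}$, $q_2 \equiv 0$, to obtain a homogeneous quasimorphism $q \in Q_h(G)$ whose restriction to $\langle g \rangle$ is $q_1$ and whose restriction to $\langle \phi(g) \rangle$ is zero.

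Conditions (a), (b), (c) then hold immediately: (a) $\phi(g) \in S_G(X)$ by construction; (b) $q(g^n) = n$ so $q|_{\langle g \rangle}$ is unbounded; (c) $(q \circ \phi)(g^n) = q(\phi(g)^n) = 0$. For (d), I argue by contradiction: suppose $\psi \coloneqq q - q \circ \phi$ is a homomorphism $G \to \mathbb{R}$. Then $\psi$ must vanish on $[G,G]$; but by construction $g \in [G,G]$ while $\psi(g) = q(g) - q(\phi(g)) = 1 - 0 = 1 \neq 0$, a contradiction.

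The main obstacle is condition (d): the naive extension procedure produces a $q$ that satisfies (a)--(c) but leaves $\psi = q - q\circ\phi$ a priori free to be any homomorphism (and note that homogeneity of $q$ means we cannot rule this out by testing on powers of $g$ alone). The key trick to forcing (d) is the choice $N = [G,G]$ in Proposition~\ref{prop:special non commensurable}, which places $g$ inside the commutator subgroup so that any homomorphism $\psi \colon G \to \mathbb{R}$ is forced to vanish on $g$, in contradiction with the arrangement $\psi(g) = 1$.
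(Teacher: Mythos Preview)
Your proof is correct and in fact streamlines the paper's argument. The paper applies \cref{prop:special non commensurable} \emph{twice}: first with $N=G$ to produce a special element $g_1$ (with $\phi(g_1)$ special and non-commensurable), then with $N=\aangles{g_1}$ and $A=\{g_1,\phi(g_1)\}$ to produce $g$. It then hyperbolically embeds the four cyclic groups $\angles{g_1},\angles{\phi(g_1)},\angles{g},\angles{\phi(g)}$ and extends a quasimorphism with $q(g_1)=q(\phi(g_1))=q(\phi(g))=0$, $q(g)=1$; the point is that if $r=q-q\circ\phi$ were a homomorphism then $r(g_1)=0$ forces $r$ to vanish on $\aangles{g_1}\ni g$, contradicting $r(g)=1$.

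Your choice $N=[G,G]$ achieves the same effect with a single application of the proposition and only two hyperbolically embedded subgroups: any homomorphism $G\to\mathbb R$ kills $[G,G]$ automatically, so there is no need to manufacture an auxiliary element $g_1$ on which $r$ is forced to vanish. The cost is the (easy) observation that $[G,G]\neq\{1\}$ for acylindrically hyperbolic $G$; the paper's two-step approach avoids this by building its own normal subgroup, but your route is cleaner.
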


\begin{proof}
Given $\phi\in \aut(G)\smallsetminus\inn(G)$, we
apply \cref{prop:special non commensurable} to find $g_1\in G$ such that $g_1$ and $\phi(g_1)$ are both special (with respect to $X$) and are not commensurable. Denote $N\coloneqq \aangles{g_1} \lhd G$ and apply \cref{prop:special non commensurable} again to find $g \in S_G(X) \cap N$ such that $\phi(g) \in S_G(X)$ is not commensurable to $g$, and neither of $g$, $\phi(g)$ is commensurable to $g_1$ or $\phi(g_1)$ in $G$. \cref{lem:E(g)_is_hyp_emb} implies that the family $\{\angles{g_1},\angles{\phi(g_1)},\angles{g},\angles{\phi(g)}\}$ is hyperbolically embedded in $G$.

Now, according to \cref{lem:extension of qmorph}, we can
find a homogeneous quasimorphism $q\colon G \to \mathbb{R}$ such that 
\begin{equation}\label{eq:def_of_q}
q(g_1)=q(\phi(g_1))=q(\phi(g))=0~\text{ and } q(g)=1.   
\end{equation}

Clearly, the restriction of $q$ to $\angles{g}$ is unbounded and the restriction of $q\circ\phi$ to $\angles{g}$ is zero. If the difference $r \coloneqq q-q\circ \phi$ was a homomorphism, then $g_1$ would be contained in the kernel of $r$, by \eqref{eq:def_of_q}, so $N=\aangles{g_1} \subseteq \ker(r)$. Since $g \in N$, in view of \eqref{eq:def_of_q} this would imply that
\[
0=r(g)=q(g)-q(\phi(g))=1,
\]
giving a contradiction. Thus $r$ cannot be a  homomorphism, and the proof is complete.
\end{proof}

Evidently, the finite radical $K(G)$ is characteristic in any acylindrically hyperbolic group $G$, so we have a natural homomorphism $\aut(G) \to \aut(G/K(G))$, taking each automorphism of $G$ to the automorphism of $G/K(G)$ induced by it. The next lemma allows us to reduce the proof of \cref{thm:intro:main_thm} from the introduction to \cref{thm:main_with_trivial_radical}.

\begin{lemma}\label{lem:quot_by_rad_is_inner} Let $G$ be an acylindrically hyperbolic group and let $\phi \in \aut(G)$. If $\phi$ induces an inner automorphism of $G/K(G)$ then $\phi$ is strongly commensurating.    
\end{lemma}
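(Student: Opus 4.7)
The plan is to reduce the problem to a simple pigeonhole argument exploiting the finiteness of $K(G)$. By \cref{lem:sc_auts_form_a_normal_sbgps}(i), $\aut_{sc}(G)$ contains $\inn(G)$ and is a subgroup of $\aut(G)$, so $\phi$ is strongly commensurating if and only if $\inn(w^{-1}) \circ \phi$ is, for any $w \in G$. The hypothesis furnishes $w \in G$ such that $\phi(g)K(G) = wgw^{-1}K(G)$ for all $g \in G$; with this choice of $w$, the automorphism $\psi \coloneq \inn(w^{-1}) \circ \phi$ induces the identity on $G/K(G)$ (using normality of $K(G)$). I may therefore assume from the outset that $\phi(g) \in gK$ for every $g \in G$, where $K \coloneq K(G)$ is a finite normal subgroup.

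Now fix $g \in G$ and, for each $m \in \mathbb{N}$, set
\[
v_m \coloneq g^{-m}\phi(g^m).
\]
Since $\phi$ is a homomorphism acting trivially on $G/K$, the product $\phi(g^m) = \phi(g)^m$ belongs to $g^m K$, so $v_m \in K$. The sequence $(v_m)_{m \in \mathbb{N}}$ thus takes values in the finite set $K$, and by the pigeonhole principle there exist $m_1 < m_2$ with $v_{m_1} = v_{m_2}$.

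Multiplying the equality $g^{-m_1}\phi(g^{m_1}) = g^{-m_2}\phi(g^{m_2})$ on the left by $g^{m_2}$ and using $\phi(g^{m_2}) = \phi(g^{m_1})\phi(g^{m_2-m_1})$ rearranges to
\[
\phi(g^{m_2-m_1}) = \phi(g^{m_1})^{-1}\, g^{m_2-m_1}\, \phi(g^{m_1}).
\]
Setting $m \coloneq m_2-m_1 \in \mathbb{N}$ and $h \coloneq \phi(g^{m_1})^{-1}$, this is exactly the identity $\phi(g^m) = h g^m h^{-1}$ required for strong commensuration at $g$.

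If there is any obstacle at all, it is merely spotting the correct first move: passing from ``inner modulo $K(G)$'' to ``identity modulo $K(G)$'' via composition with an inner automorphism. Once that reduction is in place the proof is a one-line pigeonhole argument, and acylindrical hyperbolicity of $G$ plays no role beyond guaranteeing that the finite radical $K(G)$ is defined and finite.
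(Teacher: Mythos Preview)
Your proof is correct, but it proceeds by a genuinely different mechanism from the paper's. After the same initial reduction (replacing $\phi$ by $\inn(w^{-1})\circ\phi$ so that $\phi$ acts trivially on $G/K$), the paper fixes a \emph{uniform} exponent: with $k=|K|$ and $l=|G:\cent_G(K)|$, it observes that $g^l$ centralizes $K$, whence $\phi(g^{lk})=(g^l\varepsilon(g^l))^k=g^{lk}\varepsilon(g^l)^k=g^{lk}$ for every $g$. You instead run a pigeonhole on the sequence $v_m=g^{-m}\phi(g^m)\in K$ to obtain, for each $g$ separately, some $m$ with $\phi(g^m)$ conjugate to $g^m$. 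Your argument is arguably the more elementary of the two---it uses nothing about $K$ beyond finiteness and normality, and avoids the centralizer-index step---while the paper's version yields the mildly stronger conclusion that a single exponent $m=lk$ (and a single conjugator $w$) works for all $g$ simultaneously. Since the definition of strongly commensurating only asks for an element-dependent $m$, either route suffices.
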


\begin{proof} Observe that if $\phi$ induces an inner automorphism of $G/K(G)$ then there exist $w \in G$ and a set map $\varepsilon:G \to K(G)$ such that 
\begin{equation}\label{eq:phi_induces inner}
\phi(g)=wgw^{-1} \varepsilon(g),~\text{ for all } g \in G.    
\end{equation}
Since $K(G) \lhd G$ is a finite normal subgroup, its centralizer $\cent_G(K(G))$ is a finite index normal subgroup of $G$. Denote $k=|K(G)|$ and $l=|G:\cent_G(K(G))|$, and consider any element $g \in G$. Then $wg^lw^{-1} \in \cent_G(K(G))$, so  \eqref{eq:phi_induces inner} implies that 
\[\phi(g^{lk})=(\phi(g^l))^k= \left(wg^lw^{-1}\varepsilon(g^l)\right)^k=\left(wg^lw^{-1}\right)^k(\varepsilon(g^l))^k=w g^{lk} w^{-1}.\]
Thus $\phi$ is strongly commensurating.    
\end{proof}

\begin{proof}[Proof of \cref{thm:intro:main_thm}]
Evidently, (iii) implies (ii), and (ii) implies (i) by \cref{lem:sc_auts_form_a_normal_sbgps}.
To show that (i) implies (iii), assume that $\phi \in \aut(G) \smallsetminus \aut_{sc}(G)$. Let $\overline{G}$ denote the quotient $G/K(G)$ and let $\psi:G \to \overline{G}$ be the quotient map. 

Then $\overline{G}$ is acylindrically hyperbolic and has trivial finite radical. By \cref{lem:quot_by_rad_is_inner},  the automorphism $\overline{\phi} \in \aut(\overline{G})$, induced by $\phi$,  is not inner. Therefore, we can apply \cref{thm:main_with_trivial_radical} to find an infinite order element $\overline{g} \in \overline{G}$ and a homogeneous quasimorphism $\overline{q} \in Q_h(\overline{G})$ such that $\overline{q}|_{\angles{\overline{g}}}$ is unbounded, $(\overline{q}\circ \overline{\phi})|_{\angles{\overline{g}}}$ is zero, and $(\overline{q} - \overline{q}\circ \overline{\phi})\colon \overline{G}\to\mathbb R$ is not a homomorphism.

We can now define $q:G \to \mathbb{R}$ by $q=\overline{q} \circ \psi$ and let $g \in G$ be any preimage of $\overline{g}$. It is easy to check that $q \in Q_h(G)$ satisfies (iii), so the proof is complete.
\end{proof}


\section{Coarse automorphisms}\label{sec:crse aut}
We shall now explain how \cref{thm:intro:main_thm} implies that $\out(G)$ embeds into groups of coarse automorphisms of $G$. To do this, we need to explain some coarse geometric preliminaries.

Coarse geometry can be described as the study of geometric properties that are invariant under uniformly bounded perturbations. A coarse structure on a set $X$ is a gadget used to describe what ``uniformly bounded'' means on $X$, much like the way a ``uniform structure'' is used to describe uniformly small sets. We now briefly recall some notions and notation of coarse geometry and group theory, using notation from \cite{coarse_groups}. We refer to that work for more details and motivation.

\begin{definition}
    A \emph{coarse structure} $\mathcal E$ on a set $X$ is a collection of subsets of $X\times X$ that is closed under taking subsets and finite unions, and such that:
    \begin{enumerate}
        \item $\Delta_X \coloneqq \{(x,x)\mid x\in X\}$ belongs to $\mathcal E$;
        \item if $E\in\mathcal E$, then $\op{E}\coloneqq\{(y,x)\mid (x,y)\in E\}$ belongs to $\mathcal E$;
        \item if $E,F\in\mathcal E$, then 
        $
            E\cmp F\coloneqq\{(x,z)\mid \exists y\in X\text{ s.t. }(x,y)\in E,\ (y,z)\in F\}
        $
        belongs to $\mathcal E$.
    \end{enumerate}
    A \emph{coarse space} is a set with a coarse structure.
\end{definition}

Following \cite{coarse_groups}, we may use bold symbols to denote coarse notions, so that a coarse space $(X,\mathcal E)$ is denoted by $\crse X$.
A function $f\colon(X,\mathcal E)\to (Y,\mathcal F)$ is \emph{controlled} if $(f\times f)(E)\in \mathcal F$, for every $E\in\mathcal E$. Two functions $f,f'\colon X\to (Y,\mathcal F)$ are \emph{close} if $(f\times f')(\Delta_X)\in\mathcal F$.
A \emph{coarse map} $\crse{f\colon X\to Y}$ is the equivalence class of a controlled function $f\colon (X,\mathcal E)\to (Y,\mathcal F)$, where functions are equivalent when they are close. Two coarse spaces $\crse{X,Y}$ are \emph{coarsely equivalent} if there are coarse maps $\crse{f\colon X\to Y}$ and $\crse{g\colon Y\to X}$ such that $\crse{g\cmp f=\cid_{X}}$ and $\crse{f\cmp g=\cid_{Y}}$. It is an exercise to show that coarse spaces and coarse maps define a category, and coarse equivalences are the isomorphisms in this category.

\begin{example}\label{exmp:metric coarse}
    If $d$ is a metric on a set $Z$ then
    \[
        \mathcal E_d\coloneqq\{E\subseteq Z\times Z \mid \exists\, r\geq 0,\ (x,y)\in E \implies d(x,y)\leq r\}
    \]
    is a coarse structure on $Z$. If $(X,d_X)$ and $(Y,d_Y)$ are metric spaces, closeness of functions and coarse equivalence between $(X,\mathcal E_{d_X})$ and $(Y,\mathcal E_{d_Y})$ coincide with the usual metric notions (see \emph{e.g.}\ \cite{nowak2012large}*{Section 1.4}).
\end{example}

Let now $G$ be a group. Given $E\subseteq G\times G$, denote its translates by 
\[
    g\cdot E\coloneqq \{(gx,gy)\mid (x,y)\in E\}
    \quad \text{ and }\quad
    E\cdot g\coloneqq \{(xg,yg)\mid (x,y)\in E\}. 
\]

\begin{definition}
    A coarse structure $\mathcal E$ on $G$ is \emph{equi bi-invariant} if
    \[
    E\in\mathcal E\implies
    \left(\bigcup_{g,h\in G} g\cdot E\cdot h\right) \in \mathcal E.
    \]
\end{definition}

The importance of equi bi-invariance for $\crse G = (G,\mathcal E)$ is that it is equivalent to the requirement that the multiplication function is controlled, and hence defines a coarse map $\crse{G\times G\to G}$. When this is the case, $\crse G$ is a group object in the category of coarse spaces.

\begin{remark}
    It is not hard to show that the coarse structure $\mathcal E_d$ induced by a metric $d$ on $G$ (\cref{exmp:metric coarse}) is equi bi-invariant if and only if $d$ is coarsely equivalent to a bi-invariant metric (\cite{coarse_groups}*{Lemma 8.2.1}). One can also observe that if $d$ is bi-invariant then $(G,d)$ is a group object in the category of metric spaces and Lipschitz maps, so $\crse G = (G,\mathcal E_d)$ is the ``coarsification'' of $(G,d)$.
\end{remark}

\begin{remark}
    The group objects in the category of coarse spaces are called \emph{coarse groups}. In principle, coarse groups need not be groups, as the multiplication function only needs to satisfy the  group axioms up to closeness. This means that, for instance, the associative law only needs to hold up to uniformly boundend error: $(g_1g_2)g_3\approx g_1(g_2g_3)$.
    Examples of coarse groups that are not a group can be constructed by considering approximate subgroups of a group with a bi-invariant metric \cite{coarse_groups}*{Corollary 5.3.6}.
    However, it is still an open problem to exhibit a coarse group that is not \emph{coarsely isomorphic} to a coarsified group \cite{coarse_groups}*{Question 11.0.1}.
\end{remark}

A \emph{coarse homomorphism} between groups with equi bi-invariant coarse structures $\crse G= (G,\mathcal E)$, $\crse H=(H,\mathcal F)$ is a coarse map $\crse{\phi\colon G\to H}$ such that
\[
    \{(\phi(g_1g_2),\phi(g_1)\phi(g_2))\mid g_1,g_2\in G\} \in\mathcal F,
\]
where $\phi\colon G\to H$ is any representative of $\crse \phi$.

\begin{example}
    If $G$ is a finitely normally generated group, let $\varcrs{bw}$ be the coarse structure defined by the bi-invariant word metric $\dbw$ associated with the closure under conjugacy of a finite normally generating set $S$ (as explained in the introduction). It is worthwhile to note that, while $\dbw$ depends on $S$ and is, hence, only defined up to a Lipschitz equivalence, $\varcrs{bw}$ does not depend at all on the choice of the finite normally generating set.
    In this setup, a map $q\colon (G,\varcrs{bw})\to (\mathbb R,\varcrs{\abs{\variable}})$ defines a coarse homomorphism if and only if it is a quasimorphism.
    
    Here the assumption of finite normal generation is used to define $\dbw$, which is a `nice' bi-invariant metric for which every quasimorphism $G\to \mathbb R$ is Lipschitz and so it is controlled as a map between metric spaces $(G,\dbw)\to (\mathbb R,\abs{\variable})$.
    For a general group $G$ such a metric will not exist.
    
    In general, for an arbitrary group with an equi bi-invariant coarse structure $(G,\mathcal E)$, any representative $q$ of a coarse homomorphism $\crse q\colon (G,\mathcal E)\to(\mathbb R,\varcrs{\abs\variable})$ must be a quasimorphism, but the converse is not true because a $q\in Q(G)$ may fail to be controlled.    
    One can still define a minimal `connected' equi bi-invariant coarse structure $\varcrs[grp]{fin}$ on $G$ by closing the set of finite subsets of $G\times G$ under bilateral translations (\cite{coarse_groups}*{Section 4.4}), and it is once again the case that $q\colon (G,\varcrs[grp]{fin})\to (\mathbb R,\varcrs{\abs{\variable}})$ defines a coarse homomorphisms if and only if $q$ is a quasimorphism. If $G$ is finitely normally generated, then $\varcrs[grp]{fin}=\varcrs{bw}$.
\end{example}

Naturally, a \emph{coarse isomorphism} is a coarse homomorphisms $\crse{\phi\colon G\to H}$ for which there exists an inverse coarse homomorphism $\crse{\phi^{-1}\colon H\to G}$. Equivalently, it is easy to show that a coarse isomorphism is a coarse homomorphism that is also a coarse equivalence (\cite{coarse_groups}*{Section 5.2}).

\begin{definition}
    For $\crse{G}=(G,\mathcal E)$ group with equi bi-invariant coarse structure, we denote by $\cAut(\crse G) = \cAut(G,\mathcal E)$ the group of \emph{coarse automorphisms} of $\crse G$.
\end{definition}

We emphasize that $\cAut(\crse{G})$ consists of \emph{equivalence classes} of maps.

\begin{remark}
    If $d$ is a quasi-geodesic bi-invariant metric on $G$ then $\cAut(G,\mathcal E_d)$ is a subgroup of $\qi(G,d)$, because every coarse automorphisms is a coarse equivalence and coarse equivalences between quasi-geodesic metric spaces are quasi-isometries (see \emph{e.g.}\ \cite{coarse_groups}*{Appendix B.1}).
    In particular, when $G$ is finitely normally generated this applies to $\dbw$.
\end{remark}

We are now ready to come back to the main topic of this note. Let $G$ be a group and define
\[
\mathcal E_{G\to\mathbb R}\coloneqq
\left\{E\subseteq G\times G\ \middle|\ \forall\, q\colon G\to \mathbb R\text{ quasimorphism, }
(q\times q)(E)\in \varcrs{\abs\variable}
\right\}
\]
(this is the largest coarse structure for which all quasimorphisms are controlled).

Recall from the introduction that an HS-quasimorphism is a map $f\colon G\to H$ that pulls back quasimorphisms of $H$ to quasimorphisms of $G$; two HS-quasimorphisms are equivalent if they cannot be distinguished up to closeness by post-composing with quasimorphisms; and that $\qout(G)$ is the group of equivalence classes of invertible HS-quasimorphisms of $G$ with itself.
It is then not hard to prove the following.

\begin{proposition}[\cite{coarse_groups}*{Proposition 12.3.2}] 
    Let $G$ and $H$ be groups. The coarse structure 
    $\mathcal E_{G\to\mathbb R}$ is equi bi-invariant. Moreover, a function $f\colon G\to H$ is an HS-quasimorphism if and only if it defines a coarse homomorphism $(G, \mathcal E_{G\to\mathbb R})\to(H,\mathcal E_{H\to\mathbb R})$, and two such HS-quasimorphisms are close if and only if they are equivalent.
    In particular, $\qout(G)=\cAut(G,\mathcal E_{G\to\mathbb R})$.
\end{proposition}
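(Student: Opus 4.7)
The plan is to verify the four claims in order, treating the final equality $\qout(G)=\cAut(G,\mathcal E_{G\to\mathbb R})$ as a formal consequence of the preceding equivalences. Throughout, I will use the basic fact that if $q$ is a quasimorphism with defect $D(q)$, then iterating the defect estimate gives $|q(g_1\cdots g_k)-\sum_{i=1}^k q(g_i)|\le (k-1)D(q)$.

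First I would check, essentially by unwinding the definition, that $\mathcal E_{G\to\mathbb R}$ really is a coarse structure: it contains the diagonal since $q\times q(\Delta_G)=\Delta_{\mathbb R}$, it is obviously closed under subsets and finite unions and under $E\mapsto \op E$, and it is closed under composition because $|q(x)-q(z)|\le |q(x)-q(y)|+|q(y)-q(z)|$. For equi bi-invariance, I would argue as follows: if $E\in\mathcal E_{G\to\mathbb R}$ and $q\in Q(G)$ has defect $D$, then for every $(x,y)\in E$ and every $g,h\in G$ we have
\[
 q(gxh)=q(g)+q(x)+q(h)+\epsilon_1,\qquad q(gyh)=q(g)+q(y)+q(h)+\epsilon_2,
\]
with $|\epsilon_1|,|\epsilon_2|\le 2D$, and therefore $|q(gxh)-q(gyh)|\le |q(x)-q(y)|+4D$. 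Since $(q\times q)(E)$ is bounded by hypothesis, so is $(q\times q)\bigpar{\bigcup_{g,h} g\cdot E\cdot h}$; this gives equi bi-invariance.

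Next I would tackle the characterisation of HS-quasimorphisms as coarse homomorphisms. For the forward direction, assume $f\colon G\to H$ is an HS-quasimorphism. To see that $f$ is controlled, take $E\in\mathcal E_{G\to\mathbb R}$ and $q\in Q(H)$; then $q\circ f\in Q(G)$ by hypothesis, so $(q\circ f)\times(q\circ f)(E)$ is bounded, which says exactly that $(q\times q)(f\times f(E))$ is bounded, as required. To see that $f$ is almost a homomorphism, I would set $E_f\coloneq\{(f(g_1g_2),f(g_1)f(g_2)):g_1,g_2\in G\}$ and, for any $q\in Q(H)$, estimate
\[
 |q(f(g_1g_2))-q(f(g_1)f(g_2))|\le |D(q\circ f)|+D(q),
\]
by writing $q(f(g_1)f(g_2))=q(f(g_1))+q(f(g_2))+O(D(q))$ and comparing with $(q\circ f)(g_1g_2)$. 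For the converse, if $\crse f$ is a coarse homomorphism then for any $q\in Q(H)$ the combination of control of $f$ on $E_{g_1,g_2}\coloneq\{(g_1g_2,g_1g_2)\}$-trivialities together with the almost-homomorphism property yields
\[
 |(q\circ f)(g_1g_2)-(q\circ f)(g_1)-(q\circ f)(g_2)|\le |q(f(g_1g_2))-q(f(g_1)f(g_2))|+D(q),
\]
and both summands on the right are uniformly bounded; hence $q\circ f\in Q(G)$.

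The equivalence of ``close'' and ``HS-equivalent'' is then a pure unwinding: $f$ and $f'$ are close in $(H,\mathcal E_{H\to\mathbb R})$ iff $\{(f(g),f'(g)):g\in G\}\in\mathcal E_{H\to\mathbb R}$, iff for every $q\in Q(H)$ the set $\{(q(f(g)),q(f'(g))):g\in G\}$ is a bounded entourage of $\mathbb R$, iff $|q\circ f-q\circ f'|$ is bounded for every $q\in Q(H)$. Finally, the identification $\qout(G)=\cAut(G,\mathcal E_{G\to\mathbb R})$ follows formally: invertible HS-quasimorphisms modulo HS-equivalence correspond bijectively, and compatibly with composition, to invertible coarse homomorphisms modulo closeness, which is by definition $\cAut(G,\mathcal E_{G\to\mathbb R})$. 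The only mildly delicate point in the whole argument is the almost-homomorphism estimate, where one has to be careful to package the defects of $q$ and $q\circ f$ so that the resulting bound depends only on $q$ and $f$ and not on the individual pair $(g_1,g_2)$; but this is exactly what the bound $|q(ab)-q(a)-q(b)|\le D(q)$ provides.
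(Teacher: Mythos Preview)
The paper does not actually prove this proposition: it is stated with a citation to \cite{coarse_groups}*{Proposition 12.3.2} and then immediately used, with no argument supplied. So there is no ``paper's own proof'' to compare against.

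Your proof is correct in substance. The verification that $\mathcal E_{G\to\mathbb R}$ is a coarse structure and equi bi-invariant is a clean defect computation; the forward direction of the HS-quasimorphism/coarse-homomorphism equivalence is handled properly (both controlledness and the almost-homomorphism entourage $E_f$); and the closeness/equivalence identification really is a tautological unwinding. The final identification of $\qout(G)$ with $\cAut(G,\mathcal E_{G\to\mathbb R})$ is, as you say, formal.

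One presentational point: in the converse direction your remark about ``control of $f$ on $E_{g_1,g_2}\coloneq\{(g_1g_2,g_1g_2)\}$-trivialities'' is confusing and, in fact, unnecessary. What you actually use is only the almost-homomorphism condition $E_f\in\mathcal E_{H\to\mathbb R}$, which gives a uniform bound on $|q(f(g_1g_2))-q(f(g_1)f(g_2))|$ depending only on $q$; combined with $D(q)$ this bounds the defect of $q\circ f$. Controlledness of $f$ plays no role in showing $q\circ f\in Q(G)$---it is part of the hypothesis only because it is built into the definition of ``coarse homomorphism''. If you clean up that sentence, the argument is complete and self-contained.
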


Let us now recall the following result, which is essentially proved in \cite{coarse_groups}*{Proposition 5.2.14}.

\begin{prop}\label{prop:cAut to QOUT}
    Let $\mathcal E$ be an equi bi-invariant coarse structure on $G$. The following are equivalent:
    \begin{enumerate}
        \item $\mathcal E\subseteq \mathcal E_{G\to\mathbb R}$;
        \item fixing representatives defines a natural homomorphism
        \[
        \cAut(G,\mathcal E)\to 
        \cAut(G,\mathcal E_{G\to\mathbb R}) =\qout(G).
        \]
    \end{enumerate}
\end{prop}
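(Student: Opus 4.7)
I use the characterization of $\mathcal{E}_{G\to\mathbb{R}}$ as the largest coarse structure on $G$ in which every quasimorphism $G \to \mathbb{R}$ is controlled. The implication $(1) \Rightarrow (2)$ then amounts to the observation that the set-theoretic assignment $\phi \mapsto \phi$ takes representatives of coarse automorphisms of $(G, \mathcal{E})$ to representatives of coarse automorphisms of $(G, \mathcal{E}_{G\to\mathbb{R}})$, respecting closeness. The converse $(2) \Rightarrow (1)$ is proved contrapositively: given $E \in \mathcal{E}$ not in $\mathcal{E}_{G\to\mathbb{R}}$, I construct a perturbation $\sigma$ of $\id_G$ which is $\mathcal{E}$-close but not $\mathcal{E}_{G\to\mathbb{R}}$-close to $\id_G$, obstructing the well-definedness of the alleged homomorphism.

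\textbf{$(1) \Rightarrow (2)$.} Let $\phi$ represent $\crse{\phi} \in \cAut(G, \mathcal{E})$. To show that $\phi$ is an HS-quasimorphism, fix $q \in Q(G)$: the multiplication-defect set $M_\phi = \{(\phi(xy), \phi(x)\phi(y)) : x, y \in G\}$ lies in $\mathcal{E} \subseteq \mathcal{E}_{G\to\mathbb{R}}$, so $|q(\phi(xy)) - q(\phi(x)\phi(y))|$ is uniformly bounded, and combining this with the defect of $q$ via the triangle inequality forces $q \circ \phi \in Q(G)$. Consequently, for every $E \in \mathcal{E}_{G\to\mathbb{R}}$ and every $q \in Q(G)$, the set $q \times q(\phi \times \phi(E)) = (q \circ \phi) \times (q \circ \phi)(E)$ is bounded, so $\phi \times \phi(E) \in \mathcal{E}_{G\to\mathbb{R}}$. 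The remaining verifications (that $\phi$ preserves multiplication up to $\mathcal{E}_{G\to\mathbb{R}}$-closeness, admits a coarse inverse from any representative of $\crse{\phi}^{-1}$, and that $\mathcal{E}$-close representatives are $\mathcal{E}_{G\to\mathbb{R}}$-close) all follow immediately from $\mathcal{E} \subseteq \mathcal{E}_{G\to\mathbb{R}}$; functoriality of composition makes the resulting map a homomorphism.

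\textbf{$(2) \Rightarrow (1)$.} Let $E \in \mathcal{E}$. Replacing $E$ by $E \cup \op{E} \cup \Delta_G \in \mathcal{E}$, I may assume $E$ is symmetric and contains the diagonal. Suppose for contradiction that there is $q \in Q(G)$ with $|q(y) - q(x)|$ unbounded over $(y, x) \in E$, and pick a sequence $(y_n, x_n) \in E$ with $|q(y_n) - q(x_n)| \to \infty$. Using the symmetry of $E$ to swap entries if needed, and passing to a subsequence, I arrange that the first coordinates are pairwise distinct. Define $\sigma: G \to G$ by $\sigma(x_n) = y_n$ and $\sigma(g) = g$ otherwise. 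Then $\{(\sigma(g), g) : g \in G\} \subseteq E$, so $\sigma$ is $\mathcal{E}$-close to $\id_G$; in particular $\sigma$ represents $\cid_G$ in $\cAut(G, \mathcal{E})$. The homomorphism postulated in (2) sends $\cid_G$ to $\cid_G$, so $\sigma$ must represent $\cid_G$ in $\qout(G) = \cAut(G, \mathcal{E}_{G\to\mathbb{R}})$ as well, forcing $|q(\sigma(g)) - q(g)|$ to be uniformly bounded over $g \in G$. This contradicts $|q(\sigma(x_n)) - q(x_n)| = |q(y_n) - q(x_n)| \to \infty$.

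\textbf{Main obstacle.} The substantive step is the construction of $\sigma$ in the $(2) \Rightarrow (1)$ direction; the only mildly fiddly point is arranging that the first coordinates of the chosen subsequence are pairwise distinct, which is dispatched by a brief case split using symmetry (if the $x_n$'s take only finitely many values, switch to the pairs $(x_n, y_n)$ obtained from symmetry and pass to a subsequence where the $y_n$'s are distinct).
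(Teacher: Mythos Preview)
Your proof is correct and follows the same strategy as the paper's: for $(2)\Rightarrow(1)$ the paper cites an external result to produce a subset $B\subseteq G$ with $B\times B\in\mathcal E$ and $q(B)$ unbounded and then perturbs $\id_G$ within $B$, while you construct the perturbation directly from a sequence in $E$, but the content is identical. One small slip: for $\sigma(x_n)=y_n$ to be well-defined you need the \emph{second} coordinates $x_n$ of your pairs $(y_n,x_n)$ to be pairwise distinct, not the first; your case split in the final paragraph handles exactly this once the indexing is corrected.
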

\begin{proof}
    For $(1)\Rightarrow (2)$, let $\crse{\phi\colon (G,\mathcal E)\to (G,\mathcal E)}$ be a coarse homomorphisms and fix a representative $\phi$ for it. We need to show that $\phi$ is an HS-quasimorphism and that other representatives for $\crse\phi$ define equivalent HS-quasimorphisms.
    For the first part, fix any quasimorphism $q\in Q(G)$. By definition, for every $F\in \mathcal E_{G\to\mathbb R}$ the image $q\times q(F)$ is in $\varcrs{\abs\variable}$, that is, $q\colon (G, \mathcal E_{G\to\mathbb R})\to (\mathbb R,\varcrs{\abs\variable})$ is controlled and hence defines a coarse homomorphism.    
    Since $\mathcal E\subseteq \mathcal E_{G\to\mathbb R}$, the maps
    \[
    (G,\mathcal E)\xrightarrow{\ \phi \ }
    (G,\mathcal E) \xrightarrow{\ \id \ }
    (G,\mathcal E_{G\to\mathbb R}) \xrightarrow{\ q \ }
    (\mathbb R,\varcrs{\abs\variable})
    \]
    represent a composition of coarse homomorphisms, which implies that $q\circ \phi$ is a (controlled) quasimorphism. This proves that $\phi$ is a HS-quasimorphism. The second part is also immediate, because composition of coarse homomorphisms is well defined: a different choice of a representative $\phi'$ for $\crse \phi$ will result in the same coarse homomorphisms $\crse{q\cmp\phi}$, and, unravelling the definitions, this means that  $\phi$ and $\phi'$ cannot be told apart using quasimorphisms, \emph{i.e.}\ they are equivalent.
    
    For $(2)\Rightarrow (1)$, if $\mathcal E\nsubseteq \mathcal E_{G\to\mathbb R}$, it follows that there exists some subset $B\subseteq G$ and $q\in Q(G)$ such that $q(B)$ is unbounded in $\mathbb R$ but $B\times B\in \mathcal E$ (\cite{coarse_groups}*{Proposition~4.5.2}, the ``in particular'' statement). Arbitrarily choose a function $\phi\colon B\to B$ such that $\sup_{x\in B}\abs{q\circ \phi(x) - q(x)}=\infty$, and extend it as the identity on $G\smallsetminus B$. 
    Then $\phi$ is close to $\id_G$ with respect to $\mathcal E$, so $\crse{\phi=\cid_{G}}\in\cAut(G,\mathcal E)$, but $\phi$ is not an HS-quasimorphism.
\end{proof}

Now we need two more definitions: a coarse structure $\mathcal E$ on $G$ is \emph{$\aut(G)$-invariant} if $(\phi\times\phi)(E)\in \mathcal E$, for all $\phi\in \aut(G)$ and $E\in\mathcal E$.\footnote{%
    We are not requiring that $\bigcup_{\phi\in\aut(G)}(\phi\times\phi)(E)\in\mathcal E$: this is precisely the difference between `invariance' and `equi invariance'.}
If $\mathcal E$ is also equi bi-invariant, this means precisely that sending $\phi$ to $\crse \phi$ defines a natural homomorphism $\Psi_{\mathcal E}\colon\aut(G)\to\cAut(G,\mathcal E)$.
And finally, a coarse structure $\mathcal E$ on $G$ is \emph{connected} if it contains every finite subset of $G \times G$.

\begin{proof}[Proof of \cref{cor:intro:cAut}]
    Since $\mathcal E$ is $\aut(G)$-invariant and equi bi-invariant, the homomorphisms $\Psi_{\mathcal E}\colon \aut(G)\to\cAut(G,\mathcal E)$ is well-defined. Since $\mathcal E$ is also connected, conjugations are close to the identity. In fact, $\{(g,1)\},\{(g^{-1},1)\}\in\mathcal E$ together with equi bi-invariance imply that
    \[
    \{(gx,x)\mid x\in G\}\in\mathcal E, \qquad
    \{(gxg^{-1},gx)\mid x\in G\}\in\mathcal E,
    \]
    and the claimed closeness follows from the fact that $\mathcal E$ is closed under composition
    (this argument is simply translating \eqref{eq:conjugation is close} to the language of coarse structures).
    It follows that $\Psi_{\mathcal E}$ factors through $\out(G)$, mapping the outer automorphism $[\phi]$ to the coarse automorphism $\crse \phi$.

    Since $\mathcal E\subseteq \mathcal E_{G\to \mathbb R}$, \cref{prop:cAut to QOUT} shows that sending $\crse \phi$ to the equivalence class of $\phi$ is a well defined homomorphisms $\cAut(G,\mathcal E)\to\qout(G)$.
    The corollary follows, because \cref{cor:intro:qout} shows that for  acylindrically hyperbolically groups with no non-trivial finite normal subgroups the composition
    \[
    \out(G)\to\cAut(G,\mathcal E)\to \qout(G)
    \]
    is injective.
\end{proof}

\begin{remark}
    Since $\cAut(G,\mathcal E_d)\leqslant \qi(G,d)$ when $d$ is a bi-invariant metric, and $\mathcal E_{\dbw}\subseteq \mathcal E_{G\to\mathbb R}$ when $G$ is also finitely normally generated, \cref{cor:intro:qi} directly follows from \cref{cor:intro:cAut}.
\end{remark}


\section{Concluding thoughts}
We conclude by remarking that it is unclear how much \cref{cor:intro:cAut} is more general than \cref{cor:intro:qout}. It is in principle possible that there is a large number of pairwise different connected $\aut(G)$-invariant equi bi-invariant coarse structures on $G$, all contained in $\mathcal E_{G\to\mathbb R}$. In that case, $\cAut(G,\mathcal E)$ could be a whole suite of different groups. However we do not know if this is the case.

The situation is already mysterious enough for the  free group $F_n$ of rank $n \ge 2$. Here one can see that $\varcrs{bw}=\varcrs[grp]{fin}$ is one such coarse structure, and it is not hard to show that it is different from $\mathcal E_{G\to\mathbb R}$, because $(F_n,\varcrs{bw})$ is not coarsely abelian (\cite{coarse_groups}*{Corollary 10.1.5}). It is however unclear what other coarse structures might there be. This raises the following:

\begin{problem}\label{prob:classify coarse structures}
    Classify the $\aut(F_n)$-invariant equi bi-invariant coarse structures $\mathcal E$ on $F_n$ with $\varcrs{bw}\subseteq \mathcal E\subseteq \mathcal E_{F_n\to\mathbb R}$.
\end{problem}

\begin{remark}
    Note that if $d$ is an $\aut(F_n)$-invariant left-invariant metric on $F_n$, then $\mathcal E_d$ is $\aut(F_n)$-invariant equi bi-invariant. Moreover, $\mathcal E_d$ always contains $\varcrs{bw}$. However, it will be generally difficult to understand whether $\mathcal E_d$ is contained in $\mathcal E_{F_n\to\mathbb R}$. This brings back to the questions about the existence of $\aut$-invariant quasimorphisms mentioned in the introduction (see \cite{fournier2023Aut}).
\end{remark}

Note that a solution to \cref{prob:classify coarse structures} would not immediately clarify what the induced groups $\cAut(F_n,\mathcal E)$ are. In general, there is no reason to expect that there be homomorphisms $\cAut(F_n,\mathcal E)\to\cAut(F_n,\mathcal E')$, not even if $\mathcal E\subseteq \mathcal E'$ (a map that is controlled from $(G,\mathcal E)$ to $(G,\mathcal E)$ may fail to be controlled  from $(G,\mathcal E')$ to $(G,\mathcal E')$). We thus raise this as an independent problem:

\begin{problem}
    Classify the groups  $\cAut(F_n,\mathcal E)$, where $\mathcal E$ is a $\aut(F_n)$-invariant equi bi-invariant coarse structure with $\varcrs{bw}\subseteq \mathcal E\subseteq \mathcal E_{F_n\to\mathbb R}$.
\end{problem}
\begin{remark}
    As a matter of fact, we do not even know whether the homomorphism
    $
    \cAut(F_n,\varcrs{bw})\to \qout(F_n)
    $
    is an isomorphism.
\end{remark}

\medskip

We conclude this note with one first step in the direction of \cref{prob:classify coarse structures}.
Let $G$ be a group normally generated by a finite set $S$. We can define the \emph{coarse abelianization} of $(G,\varcrs{bw})$ as the minimal equi bi-invariant coarse structure $\varcrs{ab}$ containing $\varcrs{bw}$ and $\{\text{commutators}\}\times\{\text{commutators}\}$.
In practice, this is the coarse structure induced by the word metric with respect to the generating set
\begin{equation}\label{eq:d_ab}
 \bigcup_{g\in G}S^g \cup \{[g,h]\mid g,h \in G\}.   
\end{equation}

We denote this metric by $d_{\rm ab}$, so that $\varcrs{ab}=\mathcal E_{d_{\rm ab}}$.

It is clear that $\varcrs{ab}$ is equi bi-invariant and connected.
It is $\aut(G)$-invariant, because the image of $S$ under an automorphism $\phi$ is contained in $S^n$ for some $n\in\mathbb N$, hence $\phi(S^g)\subseteq (S^{\phi(g)})^n$ for every $g\in G$, and the set of commutators is $\aut(G)$-invariant.
It is also clear that $\varcrs{bw}\subseteq\varcrs{ab}\subseteq\mathcal E_{G\to\mathbb R}$. For the free group, we also know that the first inclusion is strict, because $(F_n,\varcrs{bw})$ is not coarsely abelian \cite{coarse_groups}*{Corollary 10.1.5}.
As it turns out, proving that the second containment is also strict is a much subtler question.

\smallskip

We first need a lemma, which was kindly explained to us by Jarek K\k{e}dra. On the derived subgroup $[F_n,F_n]\trianglelefteq F_n$, consider both the restriction of the metric $\dbw$ associated with the free generating set $S$  of $F_n$, and the \emph{commutator length}---that is, the word metric with respect to the set of commutators $\{[g,h]\mid g,h \in F_n\}$. Let $\abs\variable_{\rm bw}$ and $\abs\variable_{\rm cl}$ denote the associated length functions. 

\begin{lem}\label{lem:commutator vs cancellation length}
    For every $g\in [F_n,F_n]$, we have $2\abs{g}_{\rm cl}\leq \abs{g}_{\rm bw}$.
\end{lem}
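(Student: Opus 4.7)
The plan is to prove, by induction on $k$, the slightly stronger statement that any $g\in[F_n,F_n]$ which can be expressed as a product of $2k$ conjugates of elements of $S\cup S^{-1}$ is a product of at most $k$ commutators. This implies the lemma because, for $g\in[F_n,F_n]$, the length $|g|_{\rm bw}$ is automatically even: abelianizing any expression $g=s_1^{h_1}\cdots s_m^{h_m}$ forces $\sum_i s_i=0$ in $\mathbb{Z}^n$, so the $+s$ and $-s$ contributions must balance for every $s\in S$.

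The inductive step rests on the identity
\[
s^{h_1}\cdot(s^{-1})^{h_2}=[\,h_1 s h_1^{-1},\;h_2 h_1^{-1}\,],
\]
verified by direct computation, which shows that an adjacent pair of conjugates of a generator and its inverse is a single commutator. Given a decomposition $g=a_1 a_2\cdots a_{2k}$ with $a_i=s_i^{h_i}$ and $s_i\in S\cup S^{-1}$, the balanced condition provides some $j\in\{2,\ldots,2k\}$ with $s_j=s_1^{-1}$. One then uses the rearrangement
\[
a_1 a_2\cdots a_j\;=\;(a_1 a_j)\cdot\prod_{i=2}^{j-1} a_i^{a_j}
\]
to rewrite $g$ as the product of the commutator $a_1 a_j$ (by the identity above) and a word of length $2k-2$ in conjugates of generators, noting crucially that each $a_i^{a_j}$ is still a conjugate of $s_i$. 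This shorter word lies in $[F_n,F_n]$, since $g$ and $a_1 a_j$ do, so by the induction hypothesis it is a product of at most $k-1$ commutators. Combining these yields $|g|_{\rm cl}\le k=|g|_{\rm bw}/2$, as required.

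The only real bookkeeping point is the rearrangement identity together with the observation that the conjugated factors $a_i^{a_j}$ remain of the correct form, so that the induction hypothesis applies to a word of $2k-2$ conjugates of generators lying in the commutator subgroup; once this is set up properly the induction is straightforward, and no obstacle arises.
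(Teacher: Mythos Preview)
Your argument is essentially the same induction as the paper's: pair $s_1$ with some $s_j=s_1^{-1}$, peel off one commutator, and recurse on a product of $2k-2$ conjugates of generators. One small slip: with the convention $a^b=bab^{-1}$ (which your commutator identity $s^{h_1}(s^{-1})^{h_2}=[h_1sh_1^{-1},h_2h_1^{-1}]$ requires), the rearrangement should read
\[
a_1 a_2\cdots a_j \;=\; (a_1 a_j)\cdot\prod_{i=2}^{j-1} a_i^{\,a_j^{-1}},
\]
not $a_i^{a_j}$; this is harmless, since $a_i^{a_j^{-1}}$ is still a conjugate of $s_i$. The paper's manipulation is a touch slicker: writing $g=s_1^{h_1}\,x\,(s_1^{-1})^{h_j}\,y$ with $x=a_2\cdots a_{j-1}$ and $y=a_{j+1}\cdots a_{2k}$, one checks directly that $g=[\,s_1^{h_1},\,x h_j h_1^{-1}\,]\cdot xy$, so the leftover word $xy$ consists of the original conjugates with the matched pair simply deleted, no further conjugation needed.
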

\begin{proof}
    Let $S$ be a free generating set of $F_n$ and suppose that $\abs{g}_{\rm bw} =k$. Then
    \begin{equation}\label{eq:g as prod of conj}
    g = s_1^{h_1}\cdots s_k^{h_k}
    \end{equation}
    with $s_i\in S^{\pm 1}$ and $h_i \in F_n$. Since $g\in [F_n,F_n]$, its image vanishes in the abelianization $\mathbb Z^n$. This implies that there must be some $1<j\leq k $ such that $s_j = s_1^{-1}$ (and it also implies that $k$ must be even). We rewrite $g = s_1^{h_1}x (s_1^{-1})^{h_j}y$, where $x,y$ represent the products of the remaining conjugates from \eqref{eq:g as prod of conj}. We then have
    \begin{align*}
        g &= s_1^{h_1} x h_j s_1^{-1} h_j^{-1} y \\
        & = s_1^{h_1} x h_j 
         (h_1^{-1} h_1) s_1^{-1} (h_1^{-1} h_1) h_j^{-1} 
         (x^{-1} x)  y \\
        & = [s_1^{h_1}, x h_jh_1^{-1}] xy.
    \end{align*}
    By definition, $xy$ is a product of $k-2$ conjugates of the generators. By induction, $xy$ satisfies $k-2 \geq \abs{xy}_{\rm bw}\geq 2\abs{xy}_{\rm cl}$, and hence $k \geq 2(\abs{xy}_{\rm cl} +1)\geq 2\abs{g}_{\rm cl}$.
\end{proof}

\cref{lem:commutator vs cancellation length} lets us easily compare the commutator length with the length $\abs{\variable}_{\rm ab}$ defined by $d_{\rm ab}$.

\begin{corollary}\label{cor:abelian vs commutator length}
    For every $g\in [F_n,F_n]$, $\abs{g}_{\rm ab}=\abs{g}_{\rm cl}$, where $\abs\variable_{\rm ab }$ is the word metric corresponding to the generating set \eqref{eq:d_ab}, for a free generating set $S$ of $G=F_n$.
\end{corollary}
\begin{proof}
    A path from $1$ to $g$ is a product of conjugates of the generators of $F_n$ and commutators. Up to conjugating the commutators, we may assume that they all come last:
    \[
    g = s_1^{h_1}\cdots s_k^{h_k}[g_1,g_1']\cdots[g_l,g_l'],
    \]
    where $s_i\in S^{\pm 1}$, $h_i,g_i,g_i'\in F_n$, for $i=1,\dots,k$.
    If $k\neq 0$, \cref{lem:commutator vs cancellation length} implies that we can obtain a strictly shorter path replacing $s_1^{h_1}\cdots s_k^{h_k}$ by an appropriate product of commutators. In particular, a shortest path in the metric $d_{\rm ab}$ must be geodesic also for $d_{\rm cl}$.
\end{proof}

We are now in position to apply a highly non-trivial theorem of Kharlampovich--Myasnikov \cite{kharlampovich2001implicit} to prove the following.

\begin{thm}\label{thm:strict containment}
    For the free group $F_n$ with $n\geq 2$ the inclusion $\varcrs{ab}\subset \mathcal E_{F_n\to\mathbb R}$ is strict.
\end{thm}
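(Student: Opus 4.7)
The plan is to exhibit an infinite subset $E = \{(g_k, 1) : k \in \mathbb{N}\} \subseteq F_n\times F_n$ that lies in $\mathcal{E}_{F_n\to\mathbb{R}}$ but not in $\varcrs{ab}$. The key step will be to choose the $g_k$ inside the commutator subgroup $[F_n,F_n]$ so that
\begin{enumerate}
    \item[(a)] $\abs{g_k}_{\rm cl}\to\infty$;
    \item[(b)] $\operatorname{scl}(g_k)$ is uniformly bounded in $k$.
\end{enumerate}

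Granting such a sequence, the rest is quick. By \cref{cor:abelian vs commutator length}, $\abs{g_k}_{\rm ab} = \abs{g_k}_{\rm cl}$ for each $k$, so property (a) gives $d_{\rm ab}(g_k, 1)\to \infty$, and hence $E\notin\varcrs{ab}$. For the opposite containment, I would invoke Bavard duality (see \cite{calegari2009scl}), which provides the inequality
\[
\abs{q(g)} \,\leq\, 2\,D(q)\,\operatorname{scl}(g)
\]
for every homogeneous quasimorphism $q\in Q_h(F_n)$ and every $g\in [F_n,F_n]$. Property (b) then gives a uniform bound on $\abs{q(g_k)}$ in $k$ for each fixed $q\in Q_h(F_n)$. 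Since any quasimorphism is close to its homogenization by \cref{lem:qm_close_to_hqm}, the same uniform bound holds for all $q\in Q(F_n)$. Thus $q\times q(E)$ is bounded in $\mathbb{R}^2$ for every quasimorphism $q$, which is exactly the statement $E\in\mathcal{E}_{F_n\to\mathbb{R}}$.

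The main obstacle is therefore the construction of the sequence $g_k$ realising (a) and (b), i.e.\ showing that in $F_n$ commutator length is not bounded in terms of stable commutator length. This is precisely the input attributed to Fournier-Facio in the acknowledgments, and concrete witnesses should be extracted from Kharlampovich-Myasnikov's work on the infinite width of $[F_n,F_n]$ as a verbal subgroup of $F_n$. The role of \cref{lem:commutator vs cancellation length} (Kędra's contribution) in the argument is to translate such a statement about commutator length into the corresponding statement about the bi-invariant length $\abs{\variable}_{\rm ab}$, via \cref{cor:abelian vs commutator length}; this is exactly the bridge needed to certify that the two coarse structures differ.
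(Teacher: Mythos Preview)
Your proposal is correct and follows essentially the same route as the paper: the paper invokes \cite{kharlampovich2001implicit}*{Theorem 3} to produce elements $g$ with $\abs{g}_{\rm cl}$ arbitrarily large but $g^2$ a single commutator (so $\operatorname{scl}(g)$ is uniformly bounded), then uses \cref{cor:abelian vs commutator length} exactly as you do. The only cosmetic difference is that the paper bounds $q(g)$ directly from $q(g)=\tfrac12 q(g^2)$ and the commutator bound for homogeneous quasimorphisms, whereas you phrase the same estimate via Bavard duality.
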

\begin{proof}
    By \cite{kharlampovich2001implicit}*{Theorem 3}, there exists a (non-explicit!) set $B\subset [F_n,F_n]$ consisting of elements of arbitrarily large commutator length, such that the squares $g^2$, with $g\in B$, have uniformly bounded commutator length (see also \cite{bartholdi2024commutator}*{Section 1.2}).
    In particular, for every homogeneous quasimorphism $q\in Q_h(F_n)$ and all $g \in B$  we have $q(g)=\frac{q(g^2)}{2}=0$.
    This shows that $B$ is bounded with respect to $\mathcal E_{F_n\to\mathbb R}$ (\emph{i.e.}\ $B\times B\in \mathcal E_{F_n\to\mathbb R}$). 
    
    On the other hand, \cref{cor:abelian vs commutator length} shows that $B$ has unbounded diameter with respect to $d_{\rm ab}$, and it is hence not bounded with respect to $\varcrs{ab}$.
\end{proof}

This answers a question implicit in \cite{coarse_groups}*{Remark 12.3.13} in the special case of free groups.

\begin{remark}
    It is shown in \cites{coulon2025first,andre2022formal} (see \cite{coulon2025first}*{Remark 7.27}) that every acylindrically hyperbolic group contains elements of arbitrarily large commutator length whose squares have uniformly bounded commutator length.
\end{remark}

\bibliography{biblio.bib}

\end{document}